\DeclareMathAlphabet{\mathbbold}{U}{bbold}{m}{n}
\algrenewcommand\algorithmicrequire{\textbf{Input:}}
\algrenewcommand\algorithmicensure{\textbf{Output:}}
\renewcommand\algorithmicdo{}
\newcommand\NoDo{\renewcommand\algorithmicdo{}}
\newcommand{\rea}{\mathbb{R}}
\newcommand{\reap}{\mathbb{R}_+}
\newcommand{\nat}{\mathbb{N}}
\renewcommand{\int}{\mathbb{I}}
\newcommand{\bone}{\mathbbold{1}}
\newcommand{\bzero}{\mathbbold{0}}
\renewcommand{\int}{\mathbb{Z}}
\newcommand{\G}{\mathcal{G}}
\newcommand{\V}{\mathcal{V}}
\newcommand{\E}{\mathcal{E}}
\newcommand{\N}{\mathcal{N}}
\newcommand{\X}{\mathcal{X}}
\newcommand{\Is}{\mathsf{Id}}
\newcommand{\Ts}{\mathsf{T}}
\newcommand{\Us}{\mathsf{U}}
\newcommand{\Ls}{\mathsf{L}}
\newcommand{\Fs}{\mathsf{F}}
\newcommand{\Gs}{\mathsf{G}}
\newcommand{\wTs}{\widetilde{\Ts}}
\newcommand{\weTs}{\widetilde{\Ts}^e}
\DeclareMathOperator*{\argmin}{\arg\!\min}
\DeclareMathOperator*{\arginf}{\arg\!\inf}
\renewcommand{\bar}[1]{\overline{#1}}
\newcommand{\ubar}[1]{\underline{#1}}
\newcommand{\pmin}{\underline{p}}
\newcommand{\pmax}{\bar{p}}
\DeclareMathOperator{\prox}{prox}
\DeclareMathOperator{\proj}{proj}
\DeclareMathOperator{\fix}{fix}
\definecolor{myred}{rgb}{0.8, 0.0, 0.0}
\definecolor{mygray}{rgb}{0.7, 0.7, 0.7}
\definecolor{orange}{rgb}{1, 0.65, 0.0}
\newcommand{\ev}[1]{\mathbb{E}\left[#1\right]}
\newcommand{\evc}[2]{\mathbb{E}_{#1}\left[#2\right]}
\newcommand{\norm}[1]{{\left\vert\kern-0.25ex\left\vert #1\right\vert\kern-0.25ex\right\vert}}
\newcommand{\mnorm}[1]{{\left\vert\kern-0.3ex\left\vert\kern-0.3ex\left\vert#1\right\vert\kern-0.3ex\right\vert\kern-0.3ex\right\vert}}
\newcommand{\abs}[1]{{\left\vert #1 \right\vert}}
\newcommand{\virg}[1]{``#1"}
\newtheorem{assum}{Assumption}
\newtheorem{thm}{Theorem}
\newtheorem{rem}{Remark}
\newtheorem{prop}{Proposition}
\newtheorem{cor}{Corollary}
\newtheorem{defn}{Definition}
\begin{document}

\title{\fontsize{22pt}{36pt}\selectfont Robust Online Learning over Networks}
\author{Nicola Bastianello$^\star$, \IEEEmembership{Member, IEEE}, Diego Deplano$^\star$, \IEEEmembership{Member, IEEE}, \\
Mauro Franceschelli, \IEEEmembership{Senior, IEEE}, and Karl H. Johansson, \IEEEmembership{Fellow, IEEE}
\thanks{$^\star$ Nicola Bastianello and Diego Deplano are co-first authors as they contributed equally. Nicola Bastianello is the corresponding author.}
\thanks{The work of N. Bastianello and K. H. Johansson was partially supported by the European Union’s Horizon 2020 research and innovation programme under grant agreement No. 101070162, and partially by the Swedish Research Council Distinguished Professor Grant 2017-01078 Knut and Alice Wallenberg Foundation Wallenberg Scholar Grant.}
\thanks{The work of D. Deplano was supported by the project e.INS- Ecosystem of Innovation for Next Generation Sardinia (cod. ECS 00000038) funded by the Italian Ministry for Research and Education (MUR) under the National Recovery and Resilience Plan (NRRP) - MISSION 4 COMPONENT 2, \virg{From research to business} INVESTMENT 1.5, \virg{Creation and strengthening of Ecosystems of innovation} and construction of \virg{Territorial R\& D Leaders}.}
\thanks{N. Bastianello and K. H. Johansson are with the School of Electrical Engineering and Computer Science and Digital Futures, KTH Royal Institute of Technology, Stockholm, Sweden.
Emails: {\tt \{nicolba,kallej\}@kth.se}}
\thanks{D. Deplano and M. Franceschelli are with DIEE, University of Cagliari, 09123 Cagliari, Italy.
Emails: {\tt \{diego.deplano,mauro.franceschelli\}@unica.it}}
}

\maketitle
\begin{abstract}
The recent deployment of multi-agent networks has enabled the distributed solution of learning problems, where agents cooperate to train a global model without sharing their local, private data.
This work specifically targets some prevalent challenges inherent to distributed learning: (i) online training, i.e., the local data change over time; (ii) asynchronous agent computations; (iii) unreliable and limited communications; and (iv) inexact local computations.
To tackle these challenges, we apply the Distributed Operator Theoretical (DOT) version of the Alternating Direction Method of Multipliers (ADMM), which we call \virg{DOT-ADMM}.
We prove that if the DOT-ADMM operator is metric subregular, then it converges with a linear rate for a large class of (not necessarily strongly) convex learning problems toward a bounded neighborhood of the optimal time-varying solution, and characterize how such neighborhood depends on~$\text{(i)--(iv)}$.
We first derive an easy-to-verify condition for ensuring the metric subregularity of an operator, followed by tutorial examples on linear and logistic regression problems.
We corroborate the theoretical analysis with numerical simulations comparing DOT-ADMM with other state-of-the-art algorithms, showing that only the proposed algorithm exhibits robustness to (i)--(iv).
\end{abstract}

\begin{IEEEkeywords}
Distributed learning, online learning, asynchronous networks, unreliable communications.
\end{IEEEkeywords}

\section{Introduction}\label{sec:introduction}
In recent years, significant technological advancements have enabled the deployment of multi-agent systems across various domains, including robotics, power grids, and traffic networks~\cite{molzahn_survey_2017,nedic_distributed_2018}.
These systems consist of interconnected agents that leverage their computational and communication capabilities to collaborate in performing assigned tasks.
Many of these tasks -- such as estimation~\cite{Montijano21,Deplano23}, coordination and control~\cite{Deplano20,Santilli21,Deplano23novel}, resilient operation~\cite{Shang20,Sheng22,santilli2022secure}, and learning~\cite{boyd2011distributed,qian_distributed_2022,park_communication-efficient_2021} -- {can be formulated as \textit{distributed optimization problems}, see~\cite{nedic_distributed_2018,notarstefano_distributed_2019,yang_survey_2019,li2023survey} for some recent surveys.}
In this context, this work focuses specifically on distributed optimization algorithms for learning under network constraints.

Traditional machine learning methods require transmitting all the data collected by the agents to a single location, where they are processed to train a model.
However, communicating raw data exposes agents to privacy breaches, which is inadmissible in many applications such as healthcare and industry~\cite{gafni_federated_2022}.
Moreover, it is often the case that the agents collect new data over time, which requires another round of data transmission and, as a result, both an increased privacy vulnerability and the need of repeating the centralized training task.
In the alternative distributed approach, the agents within the network first process their data to compute an approximate local model, and then refine such model by sharing it with their peers and agreeing upon a common model that better fits all the distributed data sets, with the goal of improving the overall accuracy. This strategy, however, poses some practical challenges -- discussed in the next section -- both at the computation level, such as computing with different speed and precision, and at the communication level, such as loss and corruption of packets.

The focus of this paper is thus on solving online learning problems in a distributed way while addressing these practical challenges -- discussed in detail in Section~\ref{subsec:challenges} -- by applying a distributed operator theoretical (DOT) version of the alternating direction method of multipliers (ADMM), which we call \virg{DOT-ADMM}. In principle, other different approaches developed in the distributed optimization literature can be applied to this problem, which are reviewed in Section~\ref{sec:review}.
Section~\ref{sec:maincont} outlines the main technical contributions of the manuscript, regarding the linear convergence of DOT-ADMM for (not necessarily strongly) convex problems based on novel theoretical results on stochastic and metric subregular operators.

\subsection{Practical challenges in online learning over networks}\label{subsec:challenges}

\subsubsection{Asynchronous agents computations}
The agents cooperating for the solution of a learning problem are oftentimes highly heterogeneous, in particular in terms of their computational resources~\cite{gafni_federated_2022}; consequently, the agents may perform local computations at different rates.
The simple solution of synchronizing the network by enforcing that all agents terminate the $k$-th local computation at iteration $k$ entails that better-performing agents must wait for the slower ones (cf.~\cite[Fig.~2]{peng_coordinate_2016}).
Therefore, in this paper we allow the agents to perform local processing at their own pace, which is a more efficient strategy than enforcing synchronization.

\subsubsection{Unreliable communications}
In real-world scenarios, the agents have at their disposal imperfect channels to deliver the locally processed models to their peers.
One problem that may occur, particularly when relying on wireless communications, is that transmissions from one agent to another may be lost in transit (e.g., due to interference~\cite{qian_distributed_2022}). When a transmission is lost, the newly processed local model of an agent is not delivered to its neighboring agents, but the algorithm needs to be robust to this occurrence.
By \virg{robust}, we refer to the ability of the algorithm to mitigate the effect of the packet-losses without taking any specific action. Indeed, the design of specific procedures to address packet-losses may require some additional knowledge at the transmitter level, which is unfeasible in some applications.
A second problem that must be faced, especially when the local models stored by the agents are high dimensional, is the impracticability of sharing the exact local model over limited channels (e.g., when training a deep neural network). To satisfy limited communications constraints, different approaches have been explored, foremost of which is \textit{quantization/compression} of the messages exchanged by the agents~\cite{park_communication-efficient_2021}. 
But quantizing a communication implies that an inexact version of the local models is shared by the agents, which can be seen as a disturbance in the communication.
Therefore, we consider both packet-loss and packet-corruption during the communications between agents, which allow us to deal with the two above-mentioned problems.

\subsubsection{Inexact local computations} 
In learning applications, the local training performed by the agents may depend on large data-sets, and thus be computationally demanding. This is especially relevant in online set-ups, where training needs to be completed within the interval of time $[k, k+1)$.
To solve this issue, so-called \textit{stochastic gradients} are employed, which construct an approximation of the local gradients using a limited number of data points~\cite{koloskova_unified_2020}. This implies that every time a stochastic gradient is applied by an agent, some error is introduced in the algorithm.
The algorithm we propose in this manuscript needs to compute the proximal of the local cost function, where the proximal operator finds the point that minimizes a function while also being close to its argument.
However, unless the cost is proximable~\cite{Parikh14} and there is a closed form, the proximal needs to be computed via an iterative scheme, such as gradient descent. But again due to the limited computational time $[k, k+1)$ allowed to the agent, the proximal can be computed only with a limited number of iterations, introducing an approximation. The issue may be further compounded by the use of stochastic gradients instead of full gradients.

\subsection{Review of the state-of-the-art}\label{sec:review}
Many algorithms in the state-of-the-art for solving optimization and learning problems are built upon (sub)gradient methods~\cite{nedic_distributed_2018}.
Despite their effectiveness, these methods are limited to achieving, at best, sub-linear convergence, necessitating the adoption of diminishing step-sizes, even in the case of strongly convex problems.
Another class of suitable algorithms is that of gradient tracking, which can achieve convergence with the use of fixed step-sizes~\cite{xin_general_2020}. On the one hand, different gradient tracking methods have been proposed for application in an online learning context, see \textit{e.g.}~\cite{yuan_can_2020,carnevale_gtadam_2023}. On the other, the use of robust average consensus techniques has also enabled the deployment of gradient tracking for learning under the constraints of Section~\ref{subsec:challenges}, see \textit{e.g.}~\cite{xu_convergence_2018,bof_multiagent_2019,tian_achieving_2020,li_variance_2022,lei_distributed_2022}.
However, gradient tracking algorithms may suffer from a lack of robustness to some of these challenges~\cite{bin_stability_2022}.

Our approach falls in a different branch of research, which is based on the alternating direction method of multipliers (ADMM). ADMM-based algorithms have turned out to be reliable and versatile for distributed optimization~\cite{boyd2011distributed,peng_arock_2016}.
In particular, ADMM has been shown to be robust to asynchronous computations~\cite{wei_o1k_2013,chang_asynchronous_2016,peng_arock_2016}, packet losses~\cite{majzoobi_analysis_2018}, and both~\cite{Bastianello21}. Additionally, its convergence with inexact communications has been analyzed in~\cite{majzoobi_analysis_2019}, and the impact of inexact local computations has been studied in~\cite{xie_siadmm_2020}.
Moreover, the convergence of ADMM-based algorithms under network constraints has been usually shown to occur at a sub-linear rate, whereas a linear rate can be proved only under additional assumptions, such as strong convexity~\cite{Bastianello21}).

Instead, the algorithm we propose is shown to converge with a linear rate without the strong convexity assumption, but under a milder set of assumptions, while facing at the same time all the challenges described in Section~\ref{subsec:challenges}.
\subsection{Main contributions}\label{sec:maincont}
\noindent DOT-ADMM has the following set of features:
\begin{itemize}
    \item Convergence with a linear rate for a wide class of learning problems (e.g., linear and logistic regression problems);
    \item Applicability in an online scenario where the data sets available to the agents change over time;
    \item Robustness to asynchronous and inexact computations of the agents;
    \item Robustness to faulty and noisy communications.
\end{itemize}
The main theoretical results of the paper are as follows:
\begin{itemize}
    \item Time-varying stochastic operators that are averaged and metric subregular operators converge linearly (in mean) and almost surely to a neighborhood of the time-varying set of fixed points without assuming that there exists a common fixed point (see Theorem~\ref{thm:glob-msr}). 
    \item DOT-ADMM is proved to converge for a large class of online learning problems with (not necessarily strongly) convex local costs under the challenging scenario described in Section \ref{subsec:challenges} by relying on the metric subregularity property of the DOT-ADMM operator (see Theorem~\ref{thm:stochastic+noise+varying}).
    Complementary results are provided for simpler scenarios where some of the challenges do not come into play (see Corollary~\ref{cor:stochastic}) and for the case in which metric subregularity holds in a subset of the state space (see Theorem~\ref{thm:local}).
    \item An easy-to-verify sufficient condition to ensure metric subregularity of an operator is provided in Proposition \ref{prop:ULaffMS}, which basically requires the operator to be bounded by two affine operators.
    This result is then applied to standard learning problems, such as linear regression (see Proposition~\ref{prop:linear}), robust linear regression (see Proposition~\ref{prop:linear}), and logistic regression (see Proposition~\ref{prop:logistic}).
    These results can be also used as tutorial examples for other learning problems with different regression models.
    \item Extensive numerical simulations of DOT-ADMM together with a comparison with other state-of-the-art algorithms are provided in Section \ref{sec:numerical}, revealing the outperforming performance of DOT-ADMM in terms of convergence time and resilience to the challenging scenario considered in the manuscript.
\end{itemize}

\subsection{Outline}
Section~\ref{sec:notation} provides the notation used throughout the paper, gives useful preliminaries on graph theory and operator theory, and formalizes the online optimization problem of interest together with some technical working assumptions.
In Section~\ref{sec:algorithm} we formalize the technical challenges usually faced when solving online learning problems, and we present $\text{DOT-ADMM}$ as a suitable protocol to be implemented in networks to solve these problems in a distributed manner while facing all the challenges.
Section~\ref{sec:convergence} is devoted to the proof of the convergence result anticipated in Section~\ref{sec:algorithm}, which relies on a novel foundational result on stochastic operators that are metric subregular.
Section~\ref{sec:distributed-application} outlines how the proposed algorithm can be applied to different learning problems, with a focus on linear and logistic regression problems.
In Section~\ref{sec:numerical} several numerical results are carried out, and Section~\ref{sec:conclusion} gives some concluding remarks.

\section{Preliminaries and Problem formulation}\label{sec:notation}
The set of real and integer numbers are denoted by $\rea$ and $\int$, respectively, while $\reap$ and $\nat$ denote their restriction to positive entries.
Matrices are denoted by uppercase letters, vectors and scalars by lowercase letters, while sets and spaces are denoted by uppercase calligraphic letters.
The identity matrix is denoted by $I_n$, $n\in\nat$, while the vectors of ones and zeros are denoted by $\bone_n$ and $\bzero_n$; subscripts are omitted if clear from the context.
Maximum and minimum of an $n$-element vector ${u=[u_1,\ldots, u_n]^\top}$, are denoted~by $\bar{u}=\max_{i =1,\ldots,n} u_{i}$ and $ \ubar{u}=\min_{i =1,\ldots,n} u_{i}$, respectively.

\subsection{Preliminaries}

\subsubsection{Networks and graphs}
We consider networks modeled by undirected graphs ${\G=(\V,\E)}$, where $\V=\{1,\ldots,n\}$, $n\in\nat$, is the set of \emph{nodes}, and $\E\subseteq \V\times \V$ is the set of \emph{edges} connecting the nodes.
An undirected graph $\mathcal{G}$ is said to be \emph{connected} if there exists a sequence of consecutive edges between any pair of nodes $i,j \in \V$.
Nodes $i$ and $j$ are \emph{neighbors} if there exists an edge ${(i,j)\in \E}$.
The set of neighbors of node $i$ is denoted by $\mathcal{N}_i=\left\{j\in \V: (i,j)\in \E\right\}$.
For the sake of simplicity, we consider graphs with self-loops, i.e., $i\in \mathcal{N}_i$, and denote by $\eta_i = |\N_i|$ the number of neighbors and by $\xi=2|\E|=\eta_1+\cdots+\eta_n$ twice the number of undirected edges in the network.

\subsubsection{Operator theory}
We introduce some key notions from operator theory in finite-dimensional Euclidean spaces, i.e., vector spaces $\rea^n$ with $\norm{\cdot}$ and distance $d$
$$
\norm{ x}=\sqrt{ x^\top  x},\qquad d( x, y) = \norm{ x- y}.
$$
Operators $\Fs:\rea^n\rightarrow \rea^n$ are denoted with block capital letters. An operator is \emph{affine} if there exist a matrix $A\in\rea^{n\times n}$ and a vector $b\in\rea^n$ such that ${\Fs :  x\mapsto A x+b}$, and \emph{linear} if $b=\bzero$. The linear operator associated to the identity matrix $I$ is defined by ${\Is: x \mapsto I x}$.
Given $\Fs:\rea^n\rightarrow\rea^n$, ${\fix (\Fs)=\{ x\in\rea^n:\Fs( x)= x\}}$ denotes its set of \emph{fixed points}.
By further defining the \emph{projection operator} of a point $x$ over a non-empty set $\X$ as
$$
\proj_{\X}(x) = \arginf_{y\in\X} \norm{x-y},
$$
the distance of point $x$ from the set $\X$ is denoted by
$$
d_{\X}( x) = \norm{ x -  \proj_{\X}(x)} 
$$
When $\X$ is the set of fixed points of an operator $\Fs$, we use the shorthand notations $d_{\Fs}:=d_{\fix(\Fs)}$ and $\proj_{\Fs}:=\proj_{\fix(\Fs)}.$
With this notation, we now define some properties of operators, which are pivotal in this paper.

\begin{defn}\label{def:metsubreg}
An operator $\Fs : \rea^n \to \rea^n$ is metric subregular if there is a positive constant  $\gamma>0$ such that
\begin{equation}\label{eq:metric-subregularity}
    d_{\Fs}( x) \leq \gamma \norm{(\Is - \Fs)  x},\qquad \forall x\in\rea^n.
\end{equation}
When $\gamma$ is known, $\Fs$ is said to be $\gamma$-metric subregular.
\end{defn}
\begin{defn}
An operator $\Fs : \rea^n \to \rea^n$ is nonexpansive if $$\norm{\Fs( x)-\Fs( y)}\leq \norm{ x- y},\qquad \forall x,y\in\rea^n.$$
Moreover, the operator $\Gs:=(1-\alpha)\Is+\alpha\Fs$ with $\alpha\in(0,1)$ is $\alpha$-averaged if $\Fs$ is nonexpansive, or, equivalently, if
\begingroup
\medmuskip=-1mu
\thinmuskip=-1mu
\thickmuskip=-1mu
$$
\norm{\Gs( x)-\Gs( y)}^2\leq \norm{ x- y}^2 
- \frac{1-\alpha}{\alpha}\norm{(\Is-\Gs)( x)-(\Is-\Gs)( y)}^2.
$$
\endgroup
\end{defn}

A function $f : \X\rightarrow [-\infty,+\infty]$ with $\X\subseteq \rea^n$ is: \textit{proper} if its domain $\{x\in\X\ | \ f(x)<+\infty\} $ is not empty and $-\infty \not\in f(\X)$~\cite[Definition~1.4]{Bauschke2017}; \textit{lower semicontinuous} if its epigraph $\{ (x, t) \in \X \times \rea \ | \ f(x) \leq t \}$  with $\X\subseteq \rea^n$ is closed in $\rea^n \times \rea$~\cite[Lemma~1.24]{Bauschke2017}; \textit{convex} if its epigraph is a convex subset of $\X\times \rea$~\cite[Definition~8.1]{Bauschke2017}.
Let $\Gamma_0^n$ be the set of proper, lower semicontinuous, convex functions $f$ from ${\X\subseteq \rea^n}$ to $\rea \cup \{ +\infty \}$.
Then, the proximal operator of $f\in \Gamma_0^n$ is defined by
$$
\prox^{\rho}_{f}( y)=\argmin_{ x}\left\{f( x)+\frac{1}{2\rho}\norm{ x-
 y}^2\right\},
$$
where $\rho>0$ is a penalty parameter; if $\rho = 1$, it is omitted. 
We note that the projection operator is a particular case of the proximal operator when applied to the indicator function $\iota:\rea^n\rightarrow\rea^n\cup\{+\infty\}$ defined as $\iota_{\X}(x) = 0$ if $x \in \X$, and $\iota_{\X}(x)=+\infty$ otherwise.

\subsection{Problem formulation}
We consider a network of $n$ agents linked according to an undirected, connected graph ${\mathcal{G}=(\V,\E)}$. Each agent $i\in\V$ has a vector state ${ x_i(k) \in \rea^p}$ with $p\in\nat$, and has access to a \emph{time-varying} local cost $f_{i,k} : \rea^p \to \rea \cup \{ +\infty \}$, $k \in \nat$. The objective of the network is to solve the optimization problem $\min_x \sum_{i\in\V} f_{i,k}(x)$, which can be reformulated in the following distributed form:
\begin{equation}\label{eq:online-distributed-optimization}
\begin{aligned}
	 &\min_{ x_i} \sum_{i \in\V} f_{i,k}( x_i) \\
	&\:\:\text{s.t.}  \quad x_i =  x_j \ \text{if} \ (i,j) \in \E
\end{aligned}
\end{equation}
whose set of solutions is denoted by $\X^\star_k$. We make the following two standing assumptions, which are standard assumptions in online optimization~\cite{dallanese_optimization_2020}.
\begin{assum}\label{as:convexity}
At each time $k\in\nat $, the local cost functions $f_{i,k}$ of problem~\eqref{eq:online-distributed-optimization} are proper, lower semi-continuous, and convex, i.e., $f_{i,k}\in \Gamma_0^p$.
\end{assum}
\begin{assum}\label{as:time-variability}
The set of solutions $\X^\star_k$ to problem~\eqref{eq:online-distributed-optimization} is non-empty at each time $k\in\nat$. In addition, the minimum distance between two solutions at consecutive times is upper-bounded by a nonnegative constant $\sigma \geq 0$, i.e.,
$$
    \sup_{k \in \nat} \,\, \inf_{x^\star_k \in \X^\star_{k}} d_{\X^\star_{k-1}}(x^\star_k) \leq \sigma.
$$

\end{assum}

\begin{rem}
If one only considers Assumption~\ref{as:convexity}, the set of solutions $\X_k^\star$ may be empty \cite[Proposition~11.15]{Bauschke2017}; thus, Assumption~\ref{as:time-variability} requires that $\X_k^\star\neq \emptyset$.
On the other hand, the set of solutions $\X_k^\star$ may also contain infinitely many solutions and be unbounded; thus, Assumption~\ref{as:time-variability} requires that there exists an upper bound $\sigma \geq 0$ (that holds uniformly over time) to the distance between any solution $x^\star_k \in \X_k^\star$ and its projection onto the set of solutions at the previous step, i.e., $\X_{k-1}^\star$. 
\end{rem}

The dynamic nature of the problem implies that the solution -- in general -- cannot be reached exactly, but rather that the agents' states will reach a neighborhood of it~\cite{simonetto_time-varying_2020,dallanese_optimization_2020}.
Our goal is thus to quantify how closely the agents can track the optimal solution over networks characterized by the following challenging conditions, as discussed in Section~\ref{subsec:challenges}:
\begin{itemize}
    \item asynchronous agents computations;
    \item unreliable communications;
    \item inexact local computations.
\end{itemize}
The next section introduces the proposed algorithm along with the formal description of the above challenges and the main working assumptions.

\section{Proposed algorithm and \\convergence results}\label{sec:algorithm}
To solve the problem in eq. \eqref{eq:online-distributed-optimization}, we employ the Distributed Operator Theoretical (DOT) version of the Alternating Direction Method of Multipliers (ADMM), which we call \virg{$\text{DOT-ADMM}$}. It is derived by applying the relaxed Peaceman-Rachford splitting method to the dual of problem in eq.~\eqref{eq:online-distributed-optimization} (see~\cite{Bastianello21} and references therein), and its distributed implementation is detailed in Algorithm~\ref{alg:distributed-admm}.

Each agent $i \in \V$ first updates its local state $x_i \in \rea^p$ according to eq.~\eqref{eq:admm-x}, then sends some information to each neighbor $j\in\N_i$ within the packet $y_{i\to j}$. The agents are assumed to be heterogeneous in their computation capabilities, therefore at each time step only some of the agents are ready for the communication phase. In turn, after receiving the information from its neighbors, each agent updates its auxiliary state variables $z_{ij}\in\rea^p$ with $j\in\N_i$ according to eq.~\eqref{eq:admm-z}.
Note that the local update in eq.~\eqref{eq:admm-x} depends only on information available to agent $i$, while for the auxiliary update in eq.~\eqref{eq:admm-z} the agent needs to first receive the aggregate information $y_{j\to i}(k)$ from the neighbor $j$.

\begin{algorithm}[!t]
\caption{\small Distributed Operator Theoretical (DOT) ADMM}
\label{alg:distributed-admm}
\begin{algorithmic}
\Require{For each agent $i \in \V$ initialize the auxiliary variables $\{ z_{ij}(0) \}_{j \in \N_i}$; choose the relaxation $\alpha \in (0, 1)$ and the penalty $\rho > 0$.}
\Ensure{Each agent returns $x_i(k)$ that is an (approximated) solution to the distributed optimization problem in eq. \eqref{eq:online-distributed-optimization}.}
\NoDo
\State \hspace{-1.3em} \textbf{for $k=1,2,\ldots$ each active agent $i\in\V$}
\State {\color{mygray}// asynchronous computations and inexact updates} 
\State receives a local cost $f_{i,k}$ and applies the local update 
\begingroup
\medmuskip=1mu
\thinmuskip=1mu
\thickmuskip=1mu
\small
\begin{equation}\label{eq:admm-x}
x_i(k) =  \Fs_{i,k}(z(k-1)) := \prox_{f_{i,k}}^{1/\rho \eta_i} \left( \frac{1}{\rho \eta_i} \sum_{j \in \mathcal{N}_i}  z_{ij}(k-1) \right)\vspace{-1em}
\end{equation}
\normalsize
\endgroup

\For{\textbf{each agent} $j \in \N_i$}
\State{transmits the packet
\small
$$
     y_{i \to j}(k) =  2\rho  x_i(k) -  z_{ij}(k-1)
$$
\normalsize}
\EndFor

\For{\textbf{each packet $ y_{j \to i}$ received by agent $j \in \N_i$}}
\State {\color{mygray}// noisy communications with packet loss} 
\State updates the auxiliary variable
\small
\begin{equation}\label{eq:admm-z}
     z_{ij}(k) = \Ts_{ij,k}(z(k-1)) := (1 - \alpha)  z_{ij}(k-1) + \alpha  y_{j\to i}(k)\vspace{-1em}
\end{equation}
\normalsize
\EndFor\\
\hspace{-1em}\textbf{end for}
\end{algorithmic}
\end{algorithm}

Algorithm \ref{alg:distributed-admm} also makes explicit where the sources of stochasticity discussed in Section~\ref{subsec:challenges} come into play: asynchronous agents' computations and packet-loss prevent the updates in eqs. \eqref{eq:admm-x}-\eqref{eq:admm-z} to be performed at each time $k\in\nat$, whereas inexact local computations and noisy communications make these updates inaccurate.
We model these sources of stochasticity by means of the following random variables:
\begin{itemize}
    \item ${\beta_{ij}(k)\sim \text{Ber}(p_{ij})}$ are Bernoulli random variables\footnote{i.e., $\beta_{ij}(k)=1$ with probability $p_{ij}$, ${\beta_{ij}(k)=0}$ with probability $1-p_{ij}$.} with $p_{ij}\in(0,1]$ modeling the asynchronous agents' computations and packet-loss: $p_{ij}$ denotes the probability that agent $j$ has completed its computation and packet $y_{j\to i}$ successfully arrives to agent $i$;
    \item $u_{i}(k)$ and $v_{ij}(k)$ are i.i.d. random variables which represent, respectively, the additive error modeling inexact local updates of $x_i$ and noisy transmission of $ y_{j\to i}(k)$ sent by node $j$ to node $i$.
\end{itemize}

With these definitions in place, we define the perturbed variables as follows
$$
\begin{aligned}
     \widetilde{x}_i(k) &= x_i(k) +u_i(k)\\
     \widetilde{z}_{ij}(k) &= z_{ij}(k)  + \alpha v_{ij}(k)
\end{aligned}
$$
One can further notice that the additive error $u_i(k)$ on $x_i(k)$ is also an additive error on $y_{j\to i}$ (scaled by a factor equal to $2\rho$). Therefore, one can consider only one source of error $e_{ij}=v_{ij}(k) + 2\rho u_i(k)$ and write the perturbed updates as
\begingroup
\medmuskip=3mu
\thinmuskip=3mu
\thickmuskip=3mu
\begin{subequations}\label{eq:admm-complete}
\begin{align}
     x_i(k) &= \Fs_{i,k}( \widetilde{z}(k-1))\label{eq:admm-compact-x}\\
     \widetilde{z}_{ij}(k) &= \begin{cases}
    \Ts_{ij,k}(\widetilde{z}(k-1)) + \alpha  e_{ij}(k) & \text{if } \beta_{ij}(k) = 1 \\
     \widetilde{z}_{ij}(k-1)& \text{otherwise}
\end{cases}\label{eq:admm-compact-z}
\end{align}
\end{subequations}
\endgroup
where the operators $\Fs_{i,k}$, $\Ts_{ij,k}$ are those of eqs. \eqref{eq:admm-x}-\eqref{eq:admm-z}.

With this notation, we formalize next the challenging assumptions under which the problem in eq. \eqref{eq:online-distributed-optimization} must be solved.

\begin{assum}\label{as:random-updates}
At each time step, each node $j\in\V$ has completed its local computation and successfully transmits data to its neighbors $i\in\N_j$ with probability $p_{ij} \in (0, 1]$.
\end{assum}
The minimum and maximum among the probabilities of Assumption \ref{as:random-updates} are denoted by
\begin{equation}\label{eq:probs}
    \pmin = \min_{(i,j) \in \E} p_{ij},\qquad \pmax = \max_{(i,j) \in \E} p_{ij}.    
\end{equation}
\begin{assum}\label{as:additive-error}
Each node $i\in \V$ updates its local variable $x_i(k)$ with an additive error ${u_{i}(k)\in\rea^p}$ and receives information from neighbor $j\in\mathcal{N}_i$ with an additive noise ${v_{ij}(k)\in\rea^p}$ such that the overall error ${e_{ij}(k) = v_{ij}(k)+2\rho u_i(k)}$ on the update of the auxiliary variable $z_{ij}(k)$ is bounded by ${\ev{\norm{e_{ij}(k)}}\leq \nu_e<\infty}$.
\end{assum}

\subsection{Convergence results}\label{subsec:algorithm-convergence}
For the convenience of the reader, we state next our main results, while we postpone their proofs to Section~\ref{sec:convergence}.
We begin with the following theorem, which characterizes the mean linear convergence of DOT-ADMM in the stochastic scenario described in Section~\ref{subsec:challenges} and formalized in Section~\ref{sec:algorithm}.

\begin{thm}[Linear convergence]
\label{thm:stochastic+noise+varying}
Consider the online distributed optimization in problem~\eqref{eq:online-distributed-optimization} under Assumptions~{$\text{\ref{as:convexity}-\ref{as:time-variability}}$}, and a connected network of agents that solves it by running DOT-ADMM under Assumptions~\ref{as:random-updates}-\ref{as:additive-error}.
If the DOT-ADMM operator $\Ts_k$, defined block-wise by $\Ts_{ij,k}$ as in eq.~\eqref{eq:admm-z}, is $\gamma$-metric subregular, then:
\begin{itemize}
    \item[i)] there is an upper bound to the distance between the current solution $x(k)$ and the set of optimal solutions $\X_k^\star$ that holds in mean for all $k \in \nat$, and this bound has a linearly decaying dependence on the initial condition:
    \begingroup
    \medmuskip=2mu
    \thinmuskip=2mu
    \thickmuskip=2mu
    \begin{equation}\label{eq:punctual-bound}
        \hspace{-1em}\ev{d_{\X_k^\star}( x(k))} = O\left( \mu^k d_{\Ts_0}(x(0)) + \frac{1 - \mu^k}{1 - \mu} (\nu_e + \sigma) \right)
    \end{equation}
    \endgroup
    where the rate of convergence $\mu\in(0,1) $ is given in.~\eqref{eq:mu}.
    
    \item[ii)] there is an upper bound for $d_{\X_k^\star}( x(k))$ that holds almost surely when $k \to \infty$, and this bound does not depend on the initial condition:
    \begin{equation}\label{eq:asymptotic-bound}
        \limsup_{k \to +\infty} d_{\X_k^\star}( x(k)) = O\left( \frac{ \nu_e + \sigma}{1 - \mu} \right).
    \end{equation}
\end{itemize}
\end{thm}
The following corollary makes explicit how the results of Theorem~\ref{thm:stochastic+noise+varying} become stronger when some challenges are not considered.
\begin{cor}[Particular cases]
\label{cor:stochastic}
Consider the scenario of Theorem~\ref{thm:stochastic+noise+varying} and the following simplified scenarios:
\begin{itemize}
    \item[(a)] the cost functions $f_{i,k}=f_i$ are static, i.e., $\sigma = 0$;
    \item[(b)] communications are noiseless and computations are exact, i.e., there are no additive errors $\nu_e = 0$;
    \item[(c)] communications are synchronous.
\end{itemize}
Then the results of Theorem~\ref{thm:stochastic+noise+varying} become:
\begin{itemize}
    \item[i)] $(a)$ implies that the distance $d_{\X_k^\star}( x(k))$ converges linearly to $O(\nu_e)$ in mean;
    \item[ii)] $(b)$ implies that the distance $d_{\X_k}( x(k))$ converges linearly to $O(\sigma)$ in mean;
    \item[iii)] $(a) \land (b)$ implies that the distance $d_{\X_k}( x(k))$ converges linearly to zero in mean square with rate $\mu^2$; moreover, $ x(k)$ almost surely converges to the set of solutions $\X^\star$;
    \item[iv)] $(a) \land (b) \land (c)$ implies that $ x(k)$ converges linearly with rate $\mu^2$ and almost surely to the set of solutions~$\X^\star$. 
\end{itemize}
\end{cor}

Building upon Theorem \ref{thm:stochastic+noise+varying}, we also prove that linear convergence holds for strongly convex and smooth costs as the iterative solution approaches a neighborhood of the optimal solutions. This result, which encompasses that of~\cite{Bastianello21}, is termed as \textit{eventual linear convergence}.
\begin{thm}[Eventual linear convergence]
\label{thm:local}
Consider the scenario of Theorem~\ref{thm:stochastic+noise+varying}, when the cost functions $f_{i,k}=f_i$ are static and there are no additive errors $\nu_e=0$.
If the costs are strongly convex and twice continuously differentiable, then there is a finite time $k^\star \in \nat$ such that, for any initial condition:
\begin{itemize}
    \item[i)] (global) for $k \leq k^\star$, the distance $d_{\X_k}( x(k))$ decays sub-linearly in mean square;

    \item[ii)] (local) for $k > k^\star$, the distance $d_{\X_k}( x(k))$ converges linearly to zero in mean square and $x(k)$ almost surely converges to the set of solutions $\X^\star$ for $k \to \infty$.
\end{itemize}
\vspace{-1em}
\end{thm}

\subsection{Discussion of the results}\label{sec:discussion}

\subsubsection{Convergence rate and error bounds}\label{sec:convrate}
The  value of the convergence rate $\mu\in(0,1)$, resulting from the punctual upper bound to the tracking error in eq. \eqref{eq:punctual-bound} provided by Theorem~\ref{thm:stochastic+noise+varying},~is 
\begin{equation}\label{eq:mu}
    \mu = \sqrt{1 - \frac{(1-\alpha)\pmin}{\alpha\lambda} }, \quad \lambda > \max\left\{\gamma^2,\frac{(1-\alpha)\pmin}{\alpha}\right\}.
\end{equation}
where $\pmin\in(0,1]$ is the minimum probability as in eq. \eqref{eq:probs} that any node completes both the local computation and the transmission tasks, $\alpha\in(0,1)$ is the relaxation parameter of DOT-ADMM and $\gamma>0$ is the metric-subregularity constant of the operator $\Ts_k$ ruling the iterations of DOT-ADMM. 
The presence of random updates leads to a worse convergence rate compared to the convergence rate $\mu_s$ attained when all coordinates update at each iteration ($\pmin=1$), indeed, $\mu_s\leq \mu$.
This is in line with the results proved in~\cite{combettes_stochastic_2015}, and makes intuitive sense since less frequent updates (smaller values of $\pmin$) lead to slower convergence (higher values of $\mu$).

On the other hand, it is possible to make the convergence arbitrarily faster by selecting higher values of the relaxation constant $\alpha$, which, however, worsen the asymptotic error bound in eq. \eqref{eq:asymptotic-bound}: therefore, $\alpha$ constitutes a trade-off between the convergence rate and the asymptotic error. 
Another important role is played by the additive noise (through $\nu_e$) and by the time-variability of the costs (through $\sigma$), which prevent DOT-ADMM from converging to the optimal solution by introducing a non-zero term in both the punctual and asymptotic upper bounds: when these non-idealities are not considered, then one recovers exact convergence as pointed out in Corollary \ref{cor:stochastic}.
We also remark that the scaling constants hidden by the $O(\cdot)$ notation depend on the specific structure of the network (through the number of edges $\abs{\E})$.

\subsubsection{Simplified scenarios and mean square convergence}
The results of Corollary~\ref{cor:stochastic} particularize Theorem~\ref{thm:stochastic+noise+varying} for simplified scenarios in which some of the challenges faced in this work are not taken into account.
Removing the time-variability of the costs ($\sigma=0$) implies that the solution provided by DOT-ADMM converges asymptotically within an error from the optimal solution that is bounded by only $\nu_e$; instead, removing the sources of error ($\nu_e=0$) makes the error bounded by only $\sigma$.
When both these challenges are not considered ($\sigma=\nu_e=0$) then exact convergence to the set of optimal solutions can be achieved. Additionally, the linear convergence of DOT-ADMM holds not only in mean but also in mean square. This is a remarkable result since it holds even though the problem is not strongly convex and regardless of the challenging time-varying, unreliable, and asynchronous scenario (see Section \ref{sec:disc_sc}).

\subsubsection{Comparison with \cite{Bastianello21} and strongly convex problems}\label{sec:disc_sc}
The result of Theorem~\ref{thm:local} clarifies the advantage of the metric-subregularity property against strong convexity of the problem.
Indeed, Theorem \ref{thm:stochastic+noise+varying} proves that metric subregularity of the DOT-ADMM operator is sufficient for linear convergence in convex optimization problems. Theorem~\ref{thm:local} proves that strong convexity of the problem (under the additional assumption of twice differentiability of the costs) is sufficient for local linear convergence after a finite time $k^\star$, a behavior that we termed \textit{eventual linear convergence}.
The linear and logistic regression learning problems discussed in Section \ref{sec:distributed-application} constitute examples of problems that are not strongly convex but for which DOT-ADMM converges linearly because the updates are ruled by a metric subregular operator. Additionally, sub-linear convergence can be experienced for some strongly convex problems; an example is given by the scalar regularized costs $f_i(x_i)= \sqrt[3]{x_i^{4}}+\frac{\epsilon}{2}x_i^2$.
Therefore, strong convexity is neither necessary nor sufficient for global linear convergence; instead, metric subregularity of the DOT-ADMM operator is sufficient for convex problems due to Theorem 1.

Theorem \ref{thm:local} also shows how the analysis in this paper subsumes that of~\cite{Bastianello21}.
In particular, in the case of strongly convex and twice differentiable costs, DOT-ADMM converges sub-linearly until a sufficiently small neighborhood of the unique optimal solution $x^\star$ is reached.
Thereafter, the costs are well approximated by a quadratic function around $x^\star$ that, in turn, implies metric subregularity of the DOT-ADMM operator $\Ts_k$, and thus linear convergence follows by Theorem 1, finding as a special case the result of~\cite{Bastianello21}.
Thus, one of the main differences between this paper's contribution w.r.t. \cite{Bastianello21} is that \cite{Bastianello21} requires strong convexity and twice differentiability of the costs to ensure \textit{local} linear convergence, while this paper only requires metric subregularity of the DOT-ADMM operator (which does not imply strong convexity of the problem) and derive a \textit{global} linear convergence result.
In addition, while DOT-ADMM indeed follows the blueprint of \cite{Bastianello21}, it differs in that it allows for local updates to be inexact and the local costs to be time-varying.

\subsection{Proofs of the results}\label{sec:convergence}
The proofs of Theorem~\ref{thm:stochastic+noise+varying} and Theorem~\ref{thm:local} make use of the following general convergence result for stochastic operators enjoying metric-subregularity, which is another original contribution of this manuscript.

\begin{thm}\label{thm:glob-msr}
Let ${\weTs_k:\rea^m\rightarrow \rea^m}$ be a time-varying operator defined component-wise~by
\begin{equation}\label{eq:gen-stoch}
    \weTs_{\ell,k}( z) := \begin{cases}
    \Ts_{\ell,k}( z) + e_{\ell,k} & \text{if } \beta_{\ell,k} = 1 \\
    z_{\ell}& \text{otherwise}
\end{cases}
\end{equation}
for $\ell=1,\ldots,m$, where $e_{\ell,k}$ are i.i.d. random variables and ${\beta_{\ell,k}\sim \emph{\text{Ber}}(p_{\ell})}$ are Bernoulli i.i.d. random variables such that $p_{\ell}\in(0,1]$. If at each time $k\in\nat$ it holds:
\begin{itemize}
    \begingroup
    \medmuskip=0mu
    \thinmuskip=0mu
    \thickmuskip=0mu
    \item[i)] $\exists \varsigma>0$ such that $  \|\proj_{\Ts_k}(z) - \proj_{\Ts_{k-1}}(z)\| \leq \varsigma$, $\forall z \in \rea^m$;
    \endgroup
    \item[ii)] ${\Ts_k}$ is $\alpha$-averaged;
    \item[iii)] $\Ts_k$ is $\gamma$-metric subregular;
\end{itemize}
then the iteration $ z(k)=\weTs_k( z(k-1))$ converges linearly in mean according to 
\begingroup
\medmuskip=1mu
\thinmuskip=1mu
\thickmuskip=1mu
$$
\ev{d_{\Ts_k}( z(k))} \leq \sqrt{\frac{\pmax}{\pmin}} \left[\mu^k d_{\Ts_0}( z(0)) +  \sum_{h=1}^{k}\mu^{k-h}(\ev{\norm{ e_h}}+\mu\varsigma)\right]
$$
\endgroup
where the convergence rate $\mu\in(0,1)$ is given in eq. \eqref{eq:mu} and where $\pmax  = \max_\ell p_\ell$ and $\pmin = \min_\ell p_\ell$ are the maximum and minimum error probabilities. Moreover, it almost surely holds that the iteration asymptotically converges to
$$
\limsup_{k \to \infty} d_{\Ts_k}( z(k)) \leq \lim_{k \to \infty} \sqrt{\frac{\pmax}{\pmin}}\sum_{h = 1}^{k} \mu^{k - h} (\ev{\norm{ e_h}}+\mu\varsigma).
$$
\end{thm}
\begin{proof}
    See Appendix A.
\end{proof}

\begin{rem}
The results of Theorem~\ref{thm:glob-msr} are stronger than most of the literature in that they provide a punctual upper bound on the distance to the set of optimal solutions together with a linear convergence rate, in contrast to other state-of-the-art results, such as those in \cite{combettes_stochastic_2015,bastianello_stochastic_2022}. 
Results in \cite{combettes_stochastic_2015,bastianello_stochastic_2022} only rely on the averagedness property to prove sub-linear convergence of the iteration, where \cite{combettes_stochastic_2015} does not provide a punctual upper bound to the error but only an asymptotic upper bound and where \cite{bastianello_stochastic_2022} provides both using a regret-style metric.
In contrast, our Theorem~\ref{thm:glob-msr} exploits the additional metric-subregularity property to prove linear convergence by using the distance from the set of fixed points as a metric. 
The results of Theorem~\ref{thm:glob-msr} are also much more practical and can be exploited in more realistic scenarios. 
Firstly, we do not assume that the influence of additive errors vanishes over time as in \cite{combettes_stochastic_2015}, instead, we allow for persistent errors due, for instance, to the computational limitations of the agents.
Secondly, we only assume that the fixed point sets at two consecutive iterations are \virg{\textit{similar enough}} and not necessarily overlapping as assumed in \cite{combettes_stochastic_2015}.
\end{rem}

We also provide a result to cover the case in which metric subregularity does not hold in the entire state space, but only in a subspace of it: this property is called locally metric subregularity. 

\begin{thm}\label{thm:loc-msr-static-converr}
In the scenario of Theorem~\ref{thm:glob-msr}, if it holds:
\begin{itemize}
    \item[i)] $\Ts_k=\Ts$ is not time-varying, i.e., $\varsigma = 0$;
    \item[ii)] ${\Ts}$ is $\alpha$-averaged;
    \item[iii)] $\Ts$ is metric subregular in a set $\X \subset \rea^n$;
    \item[iv)] $\lim_{k\rightarrow\infty}\norm{ e_k} \rightarrow 0$;
\end{itemize}
then it almost surely holds
$
\limsup_{k \to \infty} d_{\Ts}( z(k)) = 0.
$
Moreover, there is a finite time $k^\star$ such that for $k\geq k^\star$ $\text{linear}$ convergence in mean is achieved with rate $\mu$ in eq.~\eqref{eq:mu}. 
\end{thm}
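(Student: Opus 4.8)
\emph{Strategy.} The plan is to reduce the claim to the estimates already established in the proof of Theorem~\ref{thm:glob-msr}, the only gap being that the metric-subregularity step (iii) of that proof — the bound $\norm{(\Is-\Ts)z}^2 \geq (\pmin/\gamma^2)\,d'_{\Ts}(z)^2$ from eq.~\eqref{eq:metric-sub-mnorm} — is now available only when the current iterate lies in $\X$ (which, for the statement to be meaningful, I read as containing a neighborhood of $\fix(\Ts)$). I would therefore argue in two phases: first, that the iterates enter and thereafter remain in $\X$ after an almost surely finite time $k^*$; second, that for $k \geq k^*$ the contraction-by-$\mu$ estimate of Theorem~\ref{thm:glob-msr} applies verbatim, delivering both the eventual linear rate in mean and, through its asymptotic part, the almost sure convergence to zero.

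\emph{Phase 1 (reaching $\X$).} Every step in the proof of Theorem~\ref{thm:glob-msr} up to but excluding step (iii) holds on the whole space and uses only $\alpha$-averagedness; these steps establish a stochastic quasi-Fej\'er inequality of the form $\evc{k-1}{\mnorm{z(k)-z^*}^2} \leq \mnorm{z(k-1)-z^*}^2 - \tfrac{1-\alpha}{\alpha}\norm{(\Is-\Ts)z(k-1)}^2 + \varepsilon_k$ for every $z^* \in \fix(\Ts)$, where $\varepsilon_k$ collects the error contributions and is controlled under assumption (iv). This is precisely the regime treated by Combettes and Pesquet~\cite{combettes_stochastic_2015}, whose results give that $z(k)$ converges almost surely to a point of $\fix(\Ts)$, so that $d_{\Ts}(z(k)) \to 0$ almost surely. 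Since $\X$ contains a neighborhood of $\fix(\Ts)$, for almost every realization there is a finite $k^*$ with $z(k) \in \X$ for all $k \geq k^*$; crucially, once inside $\X$ the local contraction of Phase 2 prevents the iterates from leaving, so $k^*$ is a genuine last-entry time.

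\emph{Phase 2 (local linear rate).} For $k-1 \geq k^*$ we have $z(k-1) \in \X$, so metric subregularity reinstates step (iii) and the one-step conditional bound of Theorem~\ref{thm:glob-msr} holds. Restarting the telescoping of that proof at $k^*$ with $\sigma = 0$ (and $\Ts_k = \Ts$ static) gives, for $k \geq k^*$,
$$
\ev{d'_{\Ts}(z(k))} \leq \mu^{k-k^*}\,\ev{d'_{\Ts}(z(k^*))} + \frac{1}{\sqrt{\pmin}}\sum_{h=k^*+1}^{k}\mu^{k-h}\,\ev{\norm{e_h}},
$$
which, after converting $d'$ to $d$ via eq.~\eqref{eq:equivalent-norms}, is linear convergence in mean at rate $\mu$. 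Because $\norm{e_h}\to 0$ and $\mu<1$, the convolution term vanishes by a Ces\`aro/Toeplitz argument, confirming $\limsup_k \ev{d_{\Ts}(z(k))} = 0$; the almost sure statement then follows from the Borel--Cantelli argument already used in the asymptotic part of Theorem~\ref{thm:glob-msr}.

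\emph{Main obstacle.} The delicate points are measure-theoretic. Since $k^*$ is a random, trajectory-dependent time, restarting Theorem~\ref{thm:glob-msr} at $k^*$ must be justified by a stopping-time argument: because the $\{e_{\ell,k}\}$ and $\{\beta_{\ell,k}\}$ are i.i.d. and independent of the past, conditioning on the history up to $k^*$ leaves their law unchanged, so the one-step contraction continues to hold for $k>k^*$. I expect the true crux, however, to be Phase 1, namely securing the almost sure approach to $\fix(\Ts)$ using only $\alpha$-averagedness together with the merely diminishing (not a priori summable) errors of assumption (iv); this is where the Combettes--Pesquet machinery is essential, and the argument that the iterates cannot escape $\X$ once they enter it must be tied carefully to the local contraction so that $k^*$ is well defined.
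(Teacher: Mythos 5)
Your two\-/phase skeleton --- first show $z(k)$ approaches $\fix(\Ts)$ almost surely, deduce a finite entry time $k^*$ into $\X$, then rerun the contraction estimate of Theorem~\ref{thm:glob-msr} from $k^*$ with $\sigma=0$ --- is exactly the structure of the paper's proof, and your Phase~2 (including showing that the geometric convolution of the vanishing errors tends to zero, which the paper delegates to Lemma~3.1(a) of~\cite{sundharram_distributed_2010}) matches. The genuine gap is in Phase~1. You propose to obtain the a.s.\ approach to $\fix(\Ts)$ from the stochastic quasi-Fej\'er inequality alone, i.e., from $\alpha$-averagedness plus error terms ``controlled under assumption (iv)'', via~\cite{combettes_stochastic_2015}. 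But the Combettes--Pesquet convergence results require the errors to be \emph{summable} in the appropriate mean sense; assumption (iv) only gives $\norm{e_k}\to 0$, and a quasi-Fej\'er sequence with merely vanishing, non-summable perturbations need not converge to the fixed-point set of a general averaged operator. You flag this as ``the true crux'' but leave it unresolved, and since $k^*$ cannot be defined before Phase~1 is secured, the whole argument hangs on it. The paper's mechanism is different precisely here: it derives the first claim from the \emph{asymptotic} bound of Theorem~\ref{thm:glob-msr}, i.e., it exploits metric subregularity to get the geometric factor $\mu<1$ of eq.~\eqref{eq:mu}, so that $\sum_{h\le k}\mu^{k-h}\,\ev{\norm{e_h}}\to 0$ follows from $\norm{e_h}\to 0$ alone with no summability. (The paper is admittedly terse about applying Theorem~\ref{thm:glob-msr} when subregularity is only local, but the point is that it is the geometric contraction, not quasi-Fej\'er monotonicity, that makes diminishing errors sufficient.)

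A secondary issue: your assertion that ``once inside $\X$ the local contraction prevents the iterates from leaving'' is not correct as stated --- the contraction holds only in conditional expectation, and the additive errors together with the frozen coordinates (those with $\beta_{\ell,k}=0$) can push a trajectory out of $\X$ at any step. No invariance is needed, however: since $\Ts$ is subregular at its fixed points, $\fix(\Ts)\subset\X$ and $\X$ contains a set $\{z : d_{\Ts}(z)\le r\}$ for some $r>0$; once a.s.\ convergence of $d_{\Ts}(z(k))$ to zero is in hand, almost every trajectory is eventually and permanently inside that sublevel set, which is how the paper defines $k^*$. Your remark on the stopping-time/measurability of $k^*$ is a fair one that the paper itself does not address.
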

\begin{proof}
    See Appendix B.
\end{proof}

Before proceeding with the proofs of our main results, let us conveniently rewrite the operators of the DOT-ADMM updates in eq. \eqref{eq:admm-complete} in compact form as follows\footnote{Note that since $x(k)$ is a function of $z(k-1)$, the update of $z(k)$ depends solely on $z(k-1)$ or, in other words, $x(k)$ is an internal variable of $\text{DOT-ADMM}$.}
\begingroup
\medmuskip=1mu
\thinmuskip=1mu
\thickmuskip=1mu
\begin{equation}\label{eq:compactform}
\small
\begin{aligned}
 x(k) &= \Fs_k( z(k-1)) = \prox_{f_{k}}^{1/\rho \eta}(D A^\top  z(k-1))\\
 z(k) &= \Ts_k( z(k-1)) = \left[ (1 - \alpha) I - \alpha P \right]  z(k-1) + 2 \alpha \rho P A  x(k)
\end{aligned}
\end{equation}
\endgroup
where the operator $\prox_{f_{k}}^{1/\rho \eta} : \rea^{np} \to \rea^{np}$ applies block-wise the proximal of the time-varying local costs $f_{i,k}$; 
the matrix $A\in\rea^{\xi p\times np}$ is given\footnote{The symbol $\otimes$ denotes the Kronecker product.} by $A=\Lambda\otimes I_p$ with ${\Lambda\in\{0,1\}^{\xi\times n}}$ given by ${\Lambda = \operatorname{blk\,diag}\{ \bone_{\eta_i}\}_{i = 1}^n}$;
the matrix $D\in\rea^{np\times np}$ is given by $D = \operatorname{blk\,diag}\{ (\rho\eta_i)^{-1} I_p \}_{i = 1}^n$;
the matrix ${P\in \{0,1\}^{\xi p \times \xi p} }$ is given by $P=\Pi\otimes I_p$ with $\Pi\in\{0,1\}^{\xi\times \xi}$ being a permutation matrix swapping $(i,j)\in\E$ with $(j,i)\in\E$.

\subsubsection{Proof of Theorem \ref{thm:stochastic+noise+varying}}
By~\cite[Proposition~3]{Bastianello21}, for each fixed point $ z^\star_k \in \fix(\Ts_k)$ there is $ x_k^\star=\Fs_k( z_k^\star)$ which is a solution to the problem in eq. \eqref{eq:online-distributed-optimization}. Thus, letting $\X_k^\star$ be the time-vaying set of solutions, we can write
\begingroup
\medmuskip=0mu
\thinmuskip=0mu
\thickmuskip=0mu
\begin{align*}
    d_{\X^\star_k}( x(k)) &=  \inf_{ y \in \X^\star_k} \norm{ x(k) -  y}  \overset{(i)}{\leq} \norm{ x(k) -  x^\star_k} \\
    &= \norm{\Fs_k( z(k-1)) - \Fs_k( z_k^\star)} \\
    &= \norm{\prox_{f_{k}}^{1/\rho \eta}(D A^\top  z(k-1)) - \prox_{f_{k}}^{1/\rho \eta}(D A^\top  z^\star_k)} \\
    &\overset{(ii)}{\leq} \norm{D A^\top ( z(k-1) -  z^\star_k)} \leq \norm{D A^\top} \norm{ z(k-1) -  z^\star_k} \\
    &\overset{(iii)}{=} \norm{DA^\top} d_{\Ts_k}( z(k-1))
\end{align*}
\endgroup
where $(i)$ holds since $ x_k^\star\in\X_k^\star$, $(ii)$ follows by the non-expansiveness of the proximal, and $(iii)$ holds by choosing
$$ z^\star_k =\arginf_{ y \in \fix(\Ts_k)} \mnorm{ z(k-1) -  y}.
$$
This means that the linear convergence of $ x(k)$ to a neighborhood of $\X^\star_k$ is implied by that of $ z(k)$ to a neighborhood of $\fix(\Ts_k)$, which can be proved by means of Theorem~\ref{thm:glob-msr}.
Indeed, by Assumptions~\ref{as:random-updates}-\ref{as:additive-error} the update of $ z(k)$ can be described by a stochastically
perturbed operator as in eq. \eqref{eq:gen-stoch} object of Theorem~\ref{thm:glob-msr}.
We thus prove both statements of the theorem by checking all the conditions under which Theorem~\ref{thm:glob-msr} holds:
\begin{itemize}
    \item[\textit{i)}] By definition, the fixed points of DOT-ADMM are
    $$
        \fix(\Ts_k) = \{ z \ | \ (I + P) z = 2 \rho P A x^\star_k, \ x^\star_k \in \X_k^\star \}.
    $$
    Therefore the projection of a point $z$ onto $\fix(\Ts_k)$ is \cite[section~6.2.2]{Parikh14}
    $$
        \proj_{\Ts_k}(z) = z - (I + P)^\dagger \left( (I + P) z - 2 \rho P A x^\star_k \right).
    $$
    With some simple algebra, we can then see that
    \begingroup
    \medmuskip=0mu
    \thinmuskip=0mu
    \thickmuskip=0mu
    \small
    \begin{align*}
        \norm{\proj_{\Ts_k}(z) - \proj_{\Ts_{k-1}}(z)} &= 2 \rho \norm{(I + P)^\dagger P A (x^\star_k - x^\star_{k-1})} \\
        & \overset{(i)}{\leq} 2 \rho \norm{(I + P)^\dagger P A} \sigma
    \end{align*}
    \endgroup
    where (i) follows by sub-multiplicativity of the norm and Assumption~\ref{as:time-variability}. Thus assumption~\textit{i)} of Theorem~\ref{thm:glob-msr} is verified for $\varsigma = 2 \rho \norm{(I + P)^\dagger P A} \sigma$.

    \item[\textit{ii)}] By Assumption~\ref{as:convexity} it follows that $\Ts_k$ are $\alpha$-averaged for all $k \in \nat$. Indeed, the operator $\Ts_k$ comes from the application of the Peaceman-Rachford operator to the dual problem of \eqref{eq:online-distributed-optimization}, which guarantees its $\alpha$-averagedness when the cost functions $f_{i,k}$ are convex (cfr.~\cite{Bastianello21});
        
    \item[\textit{iii)}] $\Ts_k$ is $\gamma$-metric subregular by assumption.
    \end{itemize}

\subsubsection{Proof of Theorem~\ref{thm:local}}
Since the local costs are strongly convex and twice differentiable, one can approximate the local costs by quadratic functions of the following kind
\begin{align*}
    f_i(x) &= \frac{1}{2} (x - x^\star)^\top \nabla^2 f_i(x^\star) (x - x^\star) \\ &+ \langle \nabla f_i(x^\star), x - x^\star \rangle + o(x - x^\star)
\end{align*}
where
$$
x^\star = \argmin_x \sum_{i = 1}^nf_i(x)\quad \text{and}\quad \lim_{x\rightarrow x^\star} \frac{\norm{o(x - x^\star)}}{\norm{x - x^\star}} = 0
.$$

Therefore, one can interpret DOT-ADMM as being characterized by an affine operator with an additive error that depends on the higher order terms $o(x - x^\star)$. But since affine operators are metric subregular~\cite{robinson1981some,Themelis19}, then the DOT-ADMM operator is metric subregular around the optimal solution $x^\star$. Finally, since the additive error vanishes around $x^\star$, then we can apply Theorem~\ref{thm:loc-msr-static-converr} (and, specifically, the particular cases outlined in Corollary \ref{cor:stochastic}) and prove that linear convergence can be achieved locally in mean square.

\section{Tutorial examples:\\ linear and logistic regression}\label{sec:distributed-application}
This section discusses some tutorial examples of distributed learning problems for which DOT-ADMM is characterized by a metric subregular operator, and, as a consequence, Theorems~\ref{thm:stochastic+noise+varying} and \ref{thm:local} apply.
For simplicity, the discussion is limited to the case of static local costs even though all the results apply straightforwardly to the online scenario with time-varying local costs.
We consider \textit{empirical risk minimization} (ERM) problems, in which the local cost of each agent $i \in \V$ is defined over the data set $\{a_{i,h},b_{i,h}\}_{h=1}^{m_i}$, $m_i \in \nat$:
\begin{equation}\label{eq:erm-costs}
    f_{i}(x) = \sum_{h = 1}^{m_i} g(x,a_{i,h},b_{i,h})
\end{equation}
where $g \in \Gamma_0^p$ is a suitable \textit{loss function}.
The goal of an ERM problem is that of computing online a solution $x^\star_k\in \rea^p$ to eq. \eqref{eq:online-distributed-optimization} with local costs in eq. \eqref{eq:erm-costs} where $x^\star_k$ represents the vector of trained parameters of a model.
Such a goal can be reached by employing DOT-ADMM, whose updates are ruled by the operator $\Ts$ in \eqref{eq:compactform}, recalled next
\begingroup
\medmuskip=0mu
\thinmuskip=0mu
\thickmuskip=0mu
\begin{equation}\label{eq:compact-z}
    \Ts( z) = \left[ (1 - \alpha) I - \alpha P \right]  z + 2 \alpha \rho P A \prox_{f}^{1/\rho \eta}(D^{-1} A^\top  z)
\end{equation}
\endgroup
where $\prox_{f}^{1/\rho \eta} : \rea^{np} \to \rea^{np}$ applies block-wise the proximal of the local costs $f_{i}$.
The specific structure of the operator $\Ts$ allows for \textit{regularized} versions of the local costs $f_i(x) + \frac{\epsilon}{2} \norm{x}^2$ which only results in a scaling of the proximal, i.e. (cfr. \cite[section~2.2]{Parikh14}),
$$
    \prox_{f_i + \frac{\epsilon}{2} \norm{\cdot}^2}^\rho(x) = \prox_{f_i}^{1 / (\epsilon + 1 / \rho)}\left( x / (1 + \rho \epsilon) \right).
$$

To apply Theorems~\ref{thm:stochastic+noise+varying} and \ref{thm:local} one needs to prove that $\Ts$ is metric subregular given a specific loss function $g\in \Gamma_0^p$.
To this end, we will make use of the following novel result, which provides an \textit{operative way to verify metric subregularity of an operator}.
We anticipate here that, differently from linear regression problems, the proximal of robust linear regression and logistic regression costs do not admit a closed-form solution.
Nevertheless, resorting to the following Proposition~\ref{prop:ULaffMS} it is possible to show metric subregularity indirectly.

{
\begin{prop}\label{prop:ULaffMS}
An operator $\Ts:\mathbb{R}^n\rightarrow \mathbb{R}^n$ is metric subregular if there is a matrix $A\in\rea^{n\times n}$ and two offsets $b_\Ls,b_\Us\in\rea^n$ such that $\Ts$ is lower and upper bounded (component-wise) by the affine operators $\Ls(z):=Az+b_\Ls$ and $\Us(z):=Az+b_\Us$, respectively, yielding
\begin{equation}\label{eq:ULaffMS}
\Ls(z) \leq \Ts(z)\leq \Us(z),\qquad \forall z\in\mathbb{R}^n.
\end{equation}
\end{prop}
}

\begin{proof}
See Appendix C.
\end{proof}

\vspace{-0.5em}

\subsection{Linear regression}

In linear regression problems, the data sets are such that $a_{i,h}\in\rea^p$ and $b_{i,h}\in\rea$ and the loss function $g$ is given by 
\begin{equation}\label{eq:linear-regression}
    g(x,a_{i,h},b_{i,h}) = \frac{1}{2} (a_{i,h}^\top x - b_{i,h})^2.
\end{equation}
The following result holds for this class of problems.
\begin{prop}\label{prop:linear}
Consider the operator $\Ts$ in~\eqref{eq:compact-z} characterizing DOT-ADMM applied to a linear regression problem \ref{eq:online-distributed-optimization}, that is, with local costs~\eqref{eq:erm-costs} and loss~\eqref{eq:linear-regression}. Then, $\Ts$ is metric subregular.
\end{prop}

\begin{proof}
Denoting ${A_{i} = [a_{i,1},\cdots,a_{i,m_i}]^\top \in\rea^{m_i\times p} }$ and ${b_{i}=[b_{i,1},\cdots,b_{i,m_i}]^\top\in\rea^{m_i}}$ the local costs become $f_{i}(x) = \frac{1}{2} \norm{A_{i}x_i-b_{i}}^2$.
In this particular case, the proximals of the local costs have the following closed-form expression: 
\begingroup
\medmuskip=2mu
\thinmuskip=2mu
\thickmuskip=2mu
\begin{equation}\label{eq:proximal-quadratic}
    \prox_{f_{i}}^{1/\rho\eta_i}(w) =  (A_{i}^\top A_{i}+\rho\eta_i I)^{-1} \left(\rho\eta_i w + A_{i}^\top b_{i}\right).
\end{equation}
\endgroup

By noticing that the proximals are affine functions of their argument $w$, it follows that also the operator $\Ts$ in~\eqref{eq:compact-z} is affine.
Consequently, $\Ts$ is metric subregular by Proposition \ref{prop:ULaffMS}.
\end{proof}

\subsection{Robust linear regression}
Linear regression may be sensitive to outliers when using a quadratic loss. To remedy this, it is possible to formulate a \textit{robust linear regression} problem by using the \textit{Huber loss} in the local costs~\eqref{eq:erm-costs}:
\begingroup
\medmuskip=1mu
\thinmuskip=1mu
\thickmuskip=1mu
\begin{equation}\label{eq:huber-loss}\small
    g(x,a_{i,h},b_{i,h}) = \begin{cases}
        \frac{1}{2} (a_{i,h}^\top x - b_{i,h})^2 & \text{if} \ |a_{i,h}^\top x - b_{i,h}| \leq \theta \\
        \theta ( |a_{i,h}^\top x - b_{i,h}| - \frac{\theta}{2} ) & \text{otherwise}
    \end{cases}
\end{equation}
\endgroup
with $\theta > 0$.
The following result holds for this class of problems.
\begin{prop}\label{prop:robust-linear}
Let $\Ts$ be the operator characterizing DOT-ADMM applied to a robust linear regression problem, that is, with local costs~\eqref{eq:erm-costs} and loss~\eqref{eq:huber-loss}. Then, $\Ts$ is metric subregular.
\end{prop}

\begin{proof}
The proximal of the local cost $f_i$ is
\begingroup
\medmuskip=0mu
\thinmuskip=0mu
\thickmuskip=0mu
$$\small
\prox^{1/\rho\eta_i}_{f_i}(w)=\argmin_{ x} \underbrace{\left\{\sum_{h = 1}^{m_i} g(x,a_{i,h},b_{i,h})+\frac{1}{2\rho\eta_i}\norm{x-
 w}^2\right\}}_{h(x)}.
$$
\endgroup
Thus, the proximal is the (unique) stationary point of $h(x)$, which is the solution of
$$
\frac{\partial}{\partial x}h(x) =
\sum_{h = 1}^{m_i} \frac{\partial}{\partial x} g(x,a_{i,h},b_{i,h}) + \frac{1}{\rho\eta_i}(x-w)= 0,
$$

Since by the definition of the Huber loss~\eqref{eq:huber-loss} it holds
$$
\abs{\frac{\partial}{\partial x} g(x,a_{i,h},b_{i,h})} \leq \theta \abs{a_{i,h}},\qquad \forall x\in\rea^p,
$$
it follows that
$$
w - \rho \eta_i \theta \abs{a_{i,h}} \leq \prox^{1/\rho\eta_i}_{f_i}(w)\leq w +\rho \eta_i \theta \abs{a_{i,h}}.
$$
i.e., the proximal is upper/lower bounded by the identity operator with offsets $\pm \rho \eta_i \theta \abs{a_{i,h}}$. 
Consequently, also the operator $\Ts$ is bounded by two offsetted operators and we can apply Proposition \ref{prop:ULaffMS} to prove that is metric subregular.
\end{proof}

\subsection{Logistic regression}
We turn now to classification problems using logistic regression. The data sets in this case are such that $a_{i,h}\in\rea^p$ and $b_{i,h}\in\{-1,1\}$, with the loss
\begin{equation}\label{eq:logistic-regression}
    g(x,a_{i,h},b_{i,h}) = \log\left( 1 + \exp\left( - b_{i,h} a_{i,h}^\top  x \right) \right).
\end{equation}
The following result holds for this class of problems.
\begin{prop}\label{prop:logistic}
Let $\Ts$ be the operator characterizing DOT-ADMM applied to a logistic regression problem, that is, with local costs~\eqref{eq:erm-costs} and loss~\eqref{eq:logistic-regression}. Then, $\Ts$ is metric subregular.
\end{prop}
\begin{proof}
By definition of the logistic loss~\eqref{eq:logistic-regression}, it holds
$$
    \abs{\frac{\partial}{\partial x} g(x,a_{i,h},b_{i,h})} = \abs{b_{i,h} a_{i,h}}.
$$
The proof follows by similar arguments as those in Proposition~\ref{prop:robust-linear}.
\end{proof}

\section{Numerical results}\label{sec:numerical}
In this section, we carry out numerical simulations corroborating the theoretical results of the previous sections; all simulations have been implemented in Python using the \texttt{tvopt} package~\cite{bastianello_tvopt_2021}, and run on a laptop with 12$^\text{th}$ generation Intel i7 CPU and $16$ GB of RAM.

The considered set-up is a network of $N=10$ nodes, exchanging information through a random graph topology of $20$ edges, that want to solve an online logistic regression problem characterized by~\eqref{eq:online-distributed-optimization} and the local costs
$$
    f_{i,k}(x) = \sum_{h = 1}^{m_i} \log\left( 1 + \exp\left( - b_{i,h,k} a_{i,h,k} x \right) \right) + \frac{\epsilon}{2} \norm{x}^2
$$
where $x \in \rea^{p}$, $p = 16$, is the vector of weights and intercept to be learned, and $\{ (a_{i,h,k}, b_{i,h,k} \}_{h = 1}^{m_i}\}$ with $a_{i,h,k} \in \rea^{1 \times p}$, $b_{i,h,k} \in \{ -1, 1 \}$ are the $m_i = 20$ feature vectors and class pairs available to the node at time $k \in \nat$. Notice that we add a regularization term ($\epsilon = 5$) to ensure strong convexity.

In the following sections, we discuss the performance of DOT-ADMM in the different scenarios presented in Section~\ref{subsec:challenges}. The algorithm will then be compared to the gradient tracking methods~\cite{bof_multiagent_2019} (designed to be robust to asynchrony), and~\cite{liu_linear_2021} (designed to be robust to quantization).

\begin{table}[!b]
\begin{center}
\caption{Computational time of local updates and asymptotic error for a static logistic regression problem.}
\label{tab:prox-computation-admm}
\begin{tabular}{ccc}
    \\ \hline
    Threshold               & Comp. time [s]            & Asymptotic err.           \\
    \hline
    $\theta = 10^{-14}$     & $3.47 \times 10^{-3}$     & $4.14 \times 10^{-14}$    \\
    $\theta = 10^{-12}$     & $2.84 \times 10^{-3}$     & $3.65 \times 10^{-12}$    \\
    $\theta = 10^{-10}$     & $2.42 \times 10^{-3}$     & $4.88 \times 10^{-10}$    \\
    $\theta = 10^{-8}$      & $1.95 \times 10^{-3}$     & $5.30 \times 10^{-8}$     \\
    $\theta = 10^{-6}$      & $1.39 \times 10^{-3}$     & $1.01 \times 10^{-5}$     \\
    $\theta = 10^{-4}$      & $8.88 \times 10^{-4}$     & $5.73 \times 10^{-4}$     \\
    $\theta = 10^{-2}$      & $4.12 \times 10^{-4}$     & $9.71 \times 10^{-2}$     \\
    \hline
\end{tabular}
\end{center}
\end{table}

\subsection{Local updates for logistic regression}\label{subsec:local-updates}
While running DOT-ADMM, each active node needs to compute the local update~\eqref{eq:admm-x}. However, when applied to logistic regression in eq.~\eqref{eq:logistic-regression} the proximal of $f_{i,k}$ does not have a closed form -- differently from the linear regression problem~\eqref{eq:linear-regression} -- and therefore, the proximal needs to be computed approximately.
In our set-up, a node computes an approximation of~\eqref{eq:admm-x} via the accelerated gradient descent, terminating when the distance between consecutive iterates is smaller than a threshold $\theta > 0$. The error introduced by such an inexact local update is smaller the smaller $\theta$ is. However, smaller values of the threshold make the computational time required for a local update longer, presenting a trade-off.

To exemplify this trade-off, we apply DOT-ADMM to a static version of~\eqref{eq:logistic-regression}. In Table~\ref{tab:prox-computation-admm} we report the computational time required to compute the local updates for different choices of $\theta$, as well as the corresponding asymptotic error (that is, the distance $\norm{x(k) - x^\star}$ from the unique solution at the end of the simulation). The computational time is computed by averaging over $250$ iterations of the algorithm.

Hereafter, unless otherwise stated we use DOT-ADMM with $\theta = 10^{-8}$, for a local update time of ${\sim 2.42 \times 10^{-3} s}$. For comparison, we note that a local update of DGT (with hand-tuned parameters to improve performance) requires $\sim 1.90 \times 10^{-3} s$, and in the simulations we allow DGT to run two iterations per each iteration of DOT-ADMM, to account for the longer time required in the latter local updates.

\begin{table}[!b]
\begin{center}
\caption{Asymptotic error for different quantization levels.}
\label{tab:quantization}
\begin{tabular}{cc}
    \\ \hline
    Quantization            & Asymptotic error         \\
    \hline
    No quantization         & $5.30 \times 10^{-8}$    \\
    $\delta = 10^{-10}$     & $5.30 \times 10^{-8}$    \\
    $\delta = 10^{-8}$      & $7.36 \times 10^{-8}$    \\
    $\delta = 10^{-6}$      & $4.74 \times 10^{-6}$    \\
    $\delta = 10^{-4}$      & $5.64 \times 10^{-4}$    \\
    $\delta = 10^{-2}$      & $5.32 \times 10^{-2}$    \\
    $\delta = 10^{-1}$      & $4.91 \times 10^{-1}$    \\
    \hline
\end{tabular}
\end{center}
\end{table}

\subsection{Quantized communications}
As in the section above, we consider a static logistic regression problem, and assume that the agents can exchange quantized communications. In particular, an agent $i$ can only send the quantized version $q(x)$ of a message $x \in \rea^p$, as defined component-wise by
$$
    [q(x)]_j = \begin{cases}
        \ubar{q}                                 & \text{if} \ [x]_j < \ubar{q} \\
        \delta \lfloor [x]_j / \delta \rfloor    & \text{if} \ \ubar{q} \leq [x]_j \leq \bar{q} \\
        \bar{q}                                  & \text{if} \ [x]_j > \bar{q} \\
    \end{cases}, \quad j \in \{ 1, \ldots, p \}
$$
with $\bar{q} = -\ubar{q} = 10$, and $\delta > 0$ the quantization level.
Table~\ref{tab:quantization} reports the asymptotic error of DOT-ADMM for different quantization levels $\delta$.

\subsection{Asynchrony}\label{subsec:asynchrony}
In Section~\ref{subsec:local-updates} we discussed how the local updates~\eqref{eq:admm-x} for a logistic regression problem need to be computed recursively as the proximal does not have a closed form solution. We then discussed how the threshold specifying the accuracy of the local update impacts the convergence of the algorithm.
Here we consider how recursive local updates can lead to \textit{asynchronous operations} of the agents, due to their heterogeneous computational capabilities.

We consider the following scenario: at iteration $k$ each agent completes the local update~\eqref{eq:admm-x} -- using $\theta = 10^{-8}$ -- with some probability, $\underline{p}$ or $\bar{p}$, $\underline{p} < \bar{p}$. The agents characterized by the smaller probability $\underline{p}$ are the ``slow'' nodes, which, having fewer computational resources, take on average a longer time to reach the threshold $\theta$. Notice that all the nodes use the same threshold, and their more or less frequent updates mimic the effect of different resources.

In Figure~\ref{fig:asynchrony} we report the mean tracking error (as averaged over $100$ Monte Carlo iterations) for the asynchronous case with different numbers of slow nodes $N_s$. We also compare the result with the error in the synchronous case, in which all nodes complete an update at each iteration $k$.
As discussed in Section \ref{sec:convrate}, asynchronous agent operations, which translate into random coordinate updates, lead to worse convergence rates. Indeed, the more frequent the updates are, the faster the convergence rate (until achieving that of the synchronous version), and the introduction of slower nodes implies less frequent coordinate updates overall.

\begin{figure}[!t]
\centering
\includegraphics[width=0.45\textwidth]{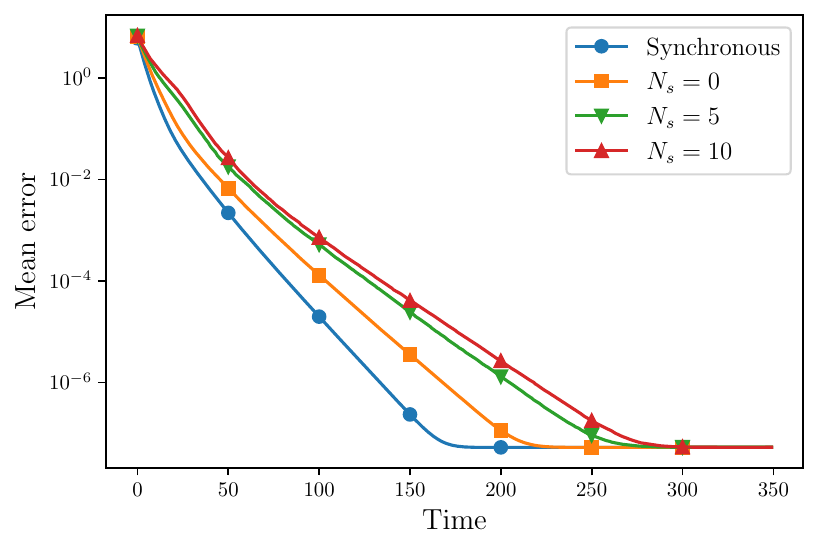}
\caption{Error trajectories of DOT-ADMM with synchronous and asynchronous updates for different numbers of slow nodes.}
\label{fig:asynchrony}
\end{figure}

\begin{figure}[!t]
\centering
\includegraphics[width=0.48\textwidth]{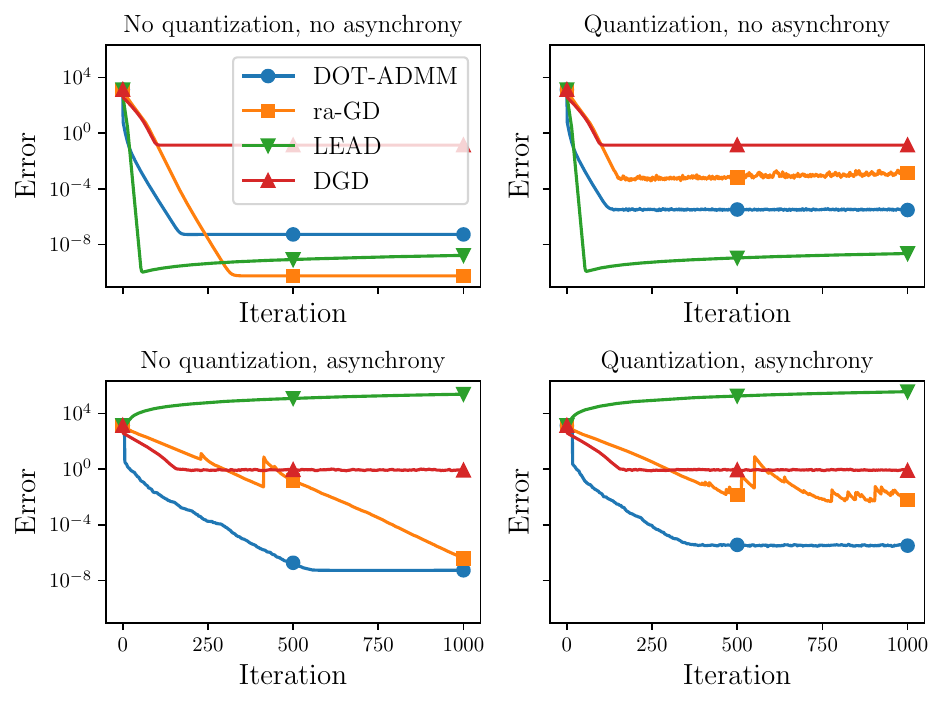}
\caption{Comparison on static problems of DOT-ADMM with ra-GD, LEAD, DGD in different scenarios combining quantization/asynchrony.}
\label{fig:comparison}
\end{figure}

\subsection{Online optimization}\label{sec:numerical-online}
In this section, we evaluate the performance of DOT-ADMM when applied to two instances of the online logistic regression problem, in which the local cost functions are piece-wise constant. Specifically, the costs change $10$ and $100$ times, respectively, and are generated so that the maximum distance between consecutive optima is $\sim 2.5$ (cf. Assumption~\ref{as:time-variability}). In Figure~\ref{fig:online} we report the tracking error of DOT-ADMM when applied to the two problems.
Notice that when the problem changes less frequently, DOT-ADMM has time to converge to smaller errors, up to the bound imposed by the inexact local updates (computed with $\theta = 10^{-4}$). Notice that in the transient the convergence is linear, as predicted by the theory. On the other hand, more frequent changes in the problem yield larger tracking errors overall.

\begin{figure}[!t]
\centering
\includegraphics[width=0.45\textwidth]{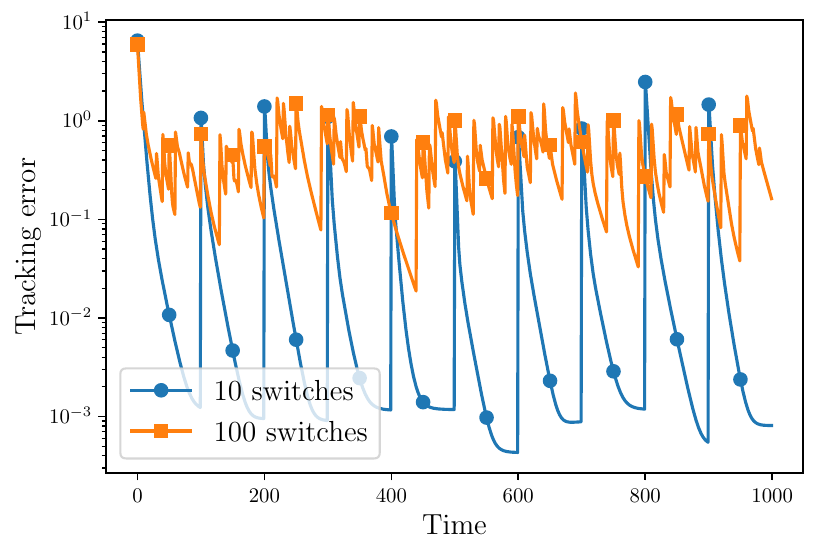}
\caption{Tracking error of DOT-ADMM applied to two online problems with different piece-wise constant cost functions.}
\label{fig:online}
\end{figure}
\begin{figure}[!t]
\centering
\includegraphics[width=0.48\textwidth]{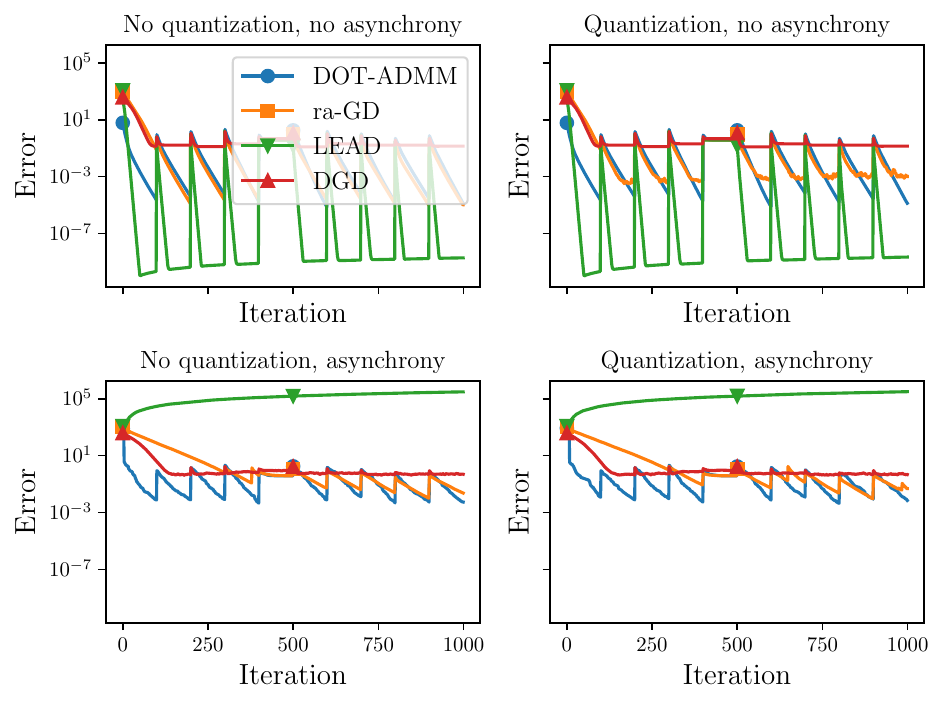}
\caption{Comparison on online problems of DOT-ADMM with ra-GD, LEAD, DGD in different scenarios combining quantization/asynchrony.}
\label{fig:comparison-online}
\end{figure}

\subsection{Comparison with state-of-the-art algorithms}
We conclude by comparing DOT-ADMM with three gradient-based methods, for both static and online problems:
\begin{itemize}
    \item \textit{ra-GD}~\cite{bof_multiagent_2019}\footnote{The paper~\cite{bof_multiagent_2019} proposes a distributed Newton method, but ra-GD can be derived by replacing the Hessians with identity matrices (cf.~\cite[Remark~IV.1]{bof_multiagent_2019}).}: {gradient tracking algorithm} which makes use of the robust ratio consensus to ensure convergence in the presence of asynchrony,

    \item \textit{LEAD}~\cite{liu_linear_2021}: {gradient tracking algorithm} which is designed to be robust to a certain class of unbiased quantizers,

    \item \textit{DGD}~\cite{yuan_convergence_2016}: which does not converge exactly, but has been shown to be robust to additive errors \cite{bastianello_distributed_2021} and online scenarios \cite{simonetto_distributed_2014}, see also \cite{yuan_can_2020}.
\end{itemize}
Due to the fact that DOT-ADMM requires a longer time to update the local states (cf. Section~\ref{subsec:local-updates}), ra-GD, LEAD, and DGD were run for a larger number of iterations to match the computational time of DOT-ADMM. All the step-sizes of these gradient methods were hand-tuned for optimal performance.

In Figure~\ref{fig:comparison} we compare the four algorithms on a static logistic regression problem, and for different scenarios combining quantization and asynchrony. In particular, we either use or not the quantizer~\cite[eq.~(14)]{liu_linear_2021}, and the agents either activate synchronously or asynchronusly, using the same set-up as Section~\ref{subsec:asynchrony}.
In accordance with the theory, ra-GD is robust to asynchrony, although the convergence is somewhat slow due to a necessarily conservative step-size choice. On the other hand, when quantization is employed the algorithm seems to converge only to a neighborhood of the optimal solution, which is larger than the neighborhood reached by DOT-ADMM (despite the fact that DOT-ADMM also uses inexact updates).
As predicted, LEAD shows convergence in the presence of quantization; however, the algorithm is not robust to asynchrony and seems to diverge when the agents are not synchronized.
Finally, DGD is robust to both quantization and asynchrony, but its inherent inexactness leads to poor performance.

In Figure~\ref{fig:comparison-online} we further compare these algorithms for the online logistic regression problem of Section~\ref{sec:numerical-online}. As we can see the performance of the different algorithms while the costs are not varying largely follows that depicted in Figure~\ref{fig:comparison}, with an increase in the error any time the problem changes.

From both Figure~\ref{fig:comparison} and Figure~\ref{fig:comparison-online} we see that only DOT-ADMM shows robustness to all the three challenges of asynchrony, quantization, and time-varying costs.

\section{Conclusions}\label{sec:conclusion}

This paper proposes DOT-ADMM to solve online learning problems in a multi-agent setting under challenging network constraints, such as asynchronous and inexact agent computations, and unreliable communications.
The convergence and robustness of DOT-ADMM have been proven by deriving novel theoretical results in stochastic operator theory for the class of metric-subregular operators, which turns out to be an important class of operators that shows linear convergence to the set of optimal solutions.
The broad applicability of this class of operators is supported by the fact that the operator ruling DOT-ADMM applied to the standard linear and logistic regression problems is indeed metric subregular.
Future works will focus on studying the optimal design of DOT-ADMM and on the characterization of the linear rate of convergence for specific distributed problems, e.g. online learning and dynamic tracking.

\bibliographystyle{IEEEtran}
\bibliography{autosam}

\section*{Appendix B: Proof of Theorem \ref{thm:glob-msr}}\label{app:glob-msr}

\textit{[Punctual upper bound] - } We make use of the so-called \textit{diagonally-weighted norm} in the sense of~\cite{FB-CTDS}, where the vector of positive weights is the vector of probabilities $p=[p_1,\ldots,p_m]^\top$, which is defined next
\begin{equation}\label{eq:newnorm}
    \mnorm{ z}^2 = \sum_{\ell = 1}^m \frac{1}{p_\ell} z_\ell^2,
\end{equation}
following the usual notation in our community~\cite{combettes_stochastic_2015}. 
Clearly, such a norm satisfies the following, where recall that $\pmax = \max_\ell p_\ell$, $\pmin = \min_\ell p_\ell$
\begin{equation}\label{eq:equivalent-norms}
    \pmin \mnorm{ z}^2 \leq \norm{ z}^2 \leq \pmax \mnorm{ z}^2.
\end{equation}
Similarly to the Euclidean distance $d_{\Ts_k}( z)$ from the set of fixed points of $\Ts$, we define the distance
${d_{\Ts_k}'( z) = \inf_{ y \in \fix(\Ts_k)} \mnorm{ z -  y}}$, such that
\begin{equation}\label{eq:metric-sub-mnorm}
    \frac{1}{\pmax} d^2_{\Ts_k}( z) \overset{(i)}{\leq} d'^2_{\Ts_k}( z) \overset{(i)}{\leq} \frac{1}{\pmin} d^2_{\Ts_k}( z) \overset{(ii)}{\leq} \frac{\gamma^2}{\pmin}  \norm{(\Is - \Ts_k) z}^2,
\end{equation}
where (i) follow by the definition of projection and eq.~\eqref{eq:equivalent-norms}, whereas (ii) by $\gamma$-metric subregularity of $\Ts_k$.

We also conveniently rewrite the operator $\weTs_k$ in eq. \eqref{eq:gen-stoch} by 
$$
 z_{\ell}(k)=\weTs_{\ell,k} ( z(k-1)) =\wTs_{\ell,k}( z(k-1)) + \beta_{\ell,k} e_{\ell,k},
$$
where
$$
\wTs_{\ell,k}( z(k-1)) = z_{\ell}(k-1) + \beta_{\ell,k} (\Ts_{\ell,k} ( z(k-1)) - z_{\ell}(k-1)).
$$
Letting $e_k\in\rea^m$ be the vector stacking all the errors and ${ z^\star_k \in \fix(\Ts_k)}$, then by eq. \eqref{eq:metric-sub-mnorm} and the triangle inequality we can write
\begingroup
\medmuskip=0mu
\thinmuskip=0mu
\thickmuskip=0mu
\begin{align*}
	d'_{\Ts_k}( z(k)) & =  
 \mnorm{\weTs_k( z(k-1)) -  z^\star_k} \leq  \mnorm{\wTs_k( z(k-1)) -  z^\star_k} + \mnorm{ e_k}
\end{align*}
\endgroup
and thus
$$
\ev{d'_{\Ts}( z(k))} \leq  \ev{\mnorm{\wTs_k( z(k-1)) -  z^\star_k}} +\ev{\mnorm{ e_k}}.
$$
We are interested in finding an upper-bound to the first term on the right-hand side of the above inequality, whose explicit form is given by $\mnorm{\wTs_k( z(k-1)) - z^\star_k}^2 =$
\begingroup
\medmuskip=0mu
\thinmuskip=0mu
\thickmuskip=0mu
\begin{align*}
	&= \sum_{\ell=1}^m \frac{1}{p_{\ell}} \Big[(1-\beta_{{\ell},k}) z_{\ell}(k-1) + \beta_{\ell,k} \Ts_{\ell,k}( z(k-1)) - z_{\ell,k}^\star) \Big]^2 \\
	&= \sum_{\ell=1}^m\Big[\frac{1-\beta_{\ell,k}}{p_{\ell}} (z_{\ell}(k-1) - z_{\ell}^\star)^2 + \frac{\beta_{\ell}}{p_{ij}} (\Ts_{\ell,k}( z(k-1)) - z_{\ell,k}^\star)^2 \Big]
\end{align*}
\endgroup
where, since $\beta_{\ell,k} \in \{ 0, 1 \}$, we have used the following: ${\beta_{\ell,k}^2 = \beta_{\ell,k}}$; ${(1-\beta_{\ell,k})^2 = (1-\beta_{\ell,k})}$; ${(1-\beta_{\ell,k}) \beta_{\ell,k} = 0}$.
An upper-bound to the conditional expectation w.r.t. the realizations of all r.v.s at time $k-1$ is given by
\begingroup
\medmuskip=1mu
\thinmuskip=1mu
\thickmuskip=1mu
\allowdisplaybreaks
\begin{align*}
	&\evc{k-1}{\mnorm{\wTs_k( z(k-1)) -  z^\star_k}^2} = \\
	&= \sum_{\ell=1}^m \left[\frac{1-p_\ell}{p_\ell} (z_\ell(k-1) - z_{\ell,k}^\star)^2 + (\Ts_{\ell,k}( z(k-1)) - z_{\ell,k}^\star)^2 \right] \\
	&= \mnorm{ z(k-1) -  z^\star_k}^2 - \norm{ z(k-1) -  z^\star_k}^2  + \norm{\Ts_k( z(k-1)) - z^\star_k}^2 \\
	&\overset{(i)}{\leq} \mnorm{ z(k-1) -  z^\star_k}^2 - \frac{1-\alpha}{\alpha} \norm{(\Is - \Ts_k)  z(k-1)}^2 \\
    &\overset{(ii)}{\leq}  d'^2_{\Ts_k}( z(k-1)) -\frac{1-\alpha}{\alpha} \norm{(\Is - \Ts_k)  z(k-1)}^2 \\
	&\overset{(iii)}{\leq} d'^2_{\Ts_k}( z(k-1)) - \frac{1-\alpha}{\alpha\gamma^2}  \pmin d'^2_{\Ts_k}( z(k-1))  \\
	&= \left(1 - \frac{(1-\alpha)\pmin}{\alpha\gamma^2} \right)d'^2_{\Ts_k}( z(k-1)) := \mu^2 d'^2_{\Ts_k}( z(k-1))
\end{align*}
\endgroup
where (i) holds by $\alpha$-averagedness, (ii) follows by selecting $ z^\star_k=\arginf_{ y \in \fix(\Ts_k)} \mnorm{ z(k-1) -  y}$, and (iii) is a consequence of metric subregularity highlighted in eq.~\eqref{eq:metric-sub-mnorm}, and where $\mu \in (0, 1)$ provided that $\gamma$ is sufficiently large, which we can always assume by overestimating the metric subregularity constant of $\Ts_k$, in particular by replacing $\gamma^2$ with $\lambda=\max\{\gamma^2, (1-\alpha)\pmin/\alpha\}$ as in eq. \eqref{eq:mu}.
Now, exploiting the (i) concavity of the square root, the (ii) Jensen's inequality and the (iii) law of total expectation, we have
\begingroup
\medmuskip=1mu
\thinmuskip=1mu
\thickmuskip=1mu
\begin{equation*}\label{eq:square-expected-value}
	\mathbb{E}\Big[\mnorm{\cdot}\Big] \overset{(i)}{=} \ev{\sqrt{\mnorm{\cdot}^2}} \overset{(ii)}{\leq} \sqrt{\ev{\mnorm{\cdot}^2}} \overset{(iii)}{=} \sqrt{\ev{\evc{k-1}{\mnorm{\cdot}^2}}}
\end{equation*}
\endgroup
which implies
$$
\ev{\mnorm{\wTs_k( z(k-1))- z^\star}}\leq \mu \ev{d_{\Ts}( z(k-1))}.
$$

Let us now combine the definition of $d'_{\Ts_k}$, assumption \textit{i)}, and the triangle inequality to derive the following bound
\begin{align*}
    &d'_{\Ts_k}( z(k-1)) = \mnorm{z(k-1) - \proj_{\fix(\Ts_k)}(z(k-1))} \\
    &\quad\leq \mnorm{z(k-1) - \proj_{\fix(\Ts_{k-1})}(z(k-1))} \\
    &\quad+ \mnorm{\proj_{\fix(\Ts_k)}(z(k-1)) - \proj_{\fix(\Ts_{k-1})}(z(k-1))} \\
    &\quad\leq d'_{\Ts_{k-1}}( z(k-1)) + \frac{1}{\sqrt{\pmin}} \varsigma.
\end{align*}
Therefore we now write
\begingroup
\medmuskip=1mu
\thinmuskip=1mu
\thickmuskip=1mu
\begin{align*}
	\ev{d'_{\Ts_k}( z(k))} &\leq \mu \ev{d'_{\Ts_k}( z(k-1))} + \ev{\mnorm{ e_k}} \\
     &\leq \mu \ev{d'_{\Ts_{k-1}}( z(k-1))} + \frac{1}{\sqrt{\pmin}} \left( \mu \varsigma + \ev{\norm{ e_k}} \right).
\end{align*}
\endgroup
Iterating we get
\begingroup
\medmuskip=0mu
\thinmuskip=0mu
\thickmuskip=0mu
\begin{equation*}
\begin{split}
    \ev{d'_{\Ts_k}( z(k))} &\leq \mu^k \ev{d'_{\Ts_0}( z(0))} + \frac{1}{\sqrt{\pmin}} \sum_{h = 1}^{k} \mu^{k - h} \left( \mu \varsigma + \ev{\norm{ e_h}} \right)
\end{split}
\end{equation*}
\endgroup
and using~\eqref{eq:equivalent-norms} again yields the thesis. 
Also, when the problem is static ($\varsigma=0$) there is not any source of error ($e_k=0$ for all $k$), the linear convergence holds also in mean square, indeed, 
\begingroup
\medmuskip=2mu
\thinmuskip=2mu
\thickmuskip=2mu
\begin{equation}\label{eq:conv_ms}
    \ev{d'^2_{\Ts_k}( z(k))} \leq \mu^{2k} \ev{d'^2_{\Ts_0}( z(0))}.
\end{equation}
\endgroup

\textit{[Asymptotic upper bound] - }
We use the same proof technique as in~\cite[Corollary~5.3]{bastianello_stochastic_2022}. Let us define
$$
    y(k) = \max\left\{ 0, d_{\Ts_k}( z(k)) - \sqrt{\frac{\pmax}{\pmin}} \sum_{h = 1}^{k} \mu^{k - h} (\norm{ e_h}+\mu\varsigma) \right\}
$$
for which, by the previous result on the expected distance and Markov's inequality, we have that for any $\varepsilon > 0$ 
$$
    \mathbb{P}[y(k) \geq \varepsilon] \leq \frac{\ev{y(k)}}{\varepsilon} \leq \frac{1}{\varepsilon} \sqrt{\frac{\pmax}{\pmin}} \mu^k d( z(0)),
$$
and, summing over $k$ and using the geometric series,
$$
    \sum_{k = 0}^\infty \mathbb{P}[y(k) \geq \varepsilon] \leq \frac{1}{\varepsilon} \sqrt{\frac{\pmax}{\pmin}} \frac{d( z(0))}{1 - \mu} < \infty.
$$
But by Borel-Cantelli lemma this means that
$
    \limsup_{k \to \infty} y(k) \leq \varepsilon
$
almost surely; and since the inequality holds for any ${\varepsilon > 0}$, the thesis follows.

\section*{Appendix B: Proof of Theorem \ref{thm:loc-msr-static-converr}} \label{app:loc-msr-static-converr}

The first claim is a straightforward consequence of Theorem~\ref{thm:glob-msr} and Lemma~3.1(a) in~\cite{sundharram_distributed_2010}.
For the second claim, notice that the map $\Ts$ is metric subregular at fixed points; indeed, if 
$ z^\star\in\fix(\Ts)$ then $d_{\Ts}( z^\star)=0$ and $\norm{(\Is-\Ts) z^\star} = 0$. This means that $\fix(\Ts) \subset \X$ and, in turn, that $\X$ is a neighborhood of $\fix(\Ts)$ with 
$$
\exists r>0:\quad  \X \supset \{  z \in \rea^m \ | \ d_{\Ts}( z) \leq r \}.
$$
But by the first claim, we know that $ z(k)$ converges almost surely to $\fix(\Ts)$ and, therefore, there exists a finite time $k^\star\in\nat$ after which the $ z(k)$ evolves inside the neighborhood $\X$ in which locally metric subregularity holds.
We can now apply Theorem~\ref{thm:glob-msr} to prove linear convergence in mean for $k\geq k^\star$, completing the proof.

\section*{Appendix C: Proof of Proposition \ref{prop:ULaffMS}}
Given an operator $\Fs:\rea^n\rightarrow\rea^n$, for any $x\in\rea^n$ we denote $\hat{x}^{\Fs}\in \fix(\Ts)$ one of the closest fixed points of $\Fs$ to $x$, namely
$$
\hat{x}^{\Fs} \in \arginf_{ y \in \fix(\Ts)} \norm{ x -  y}\quad \Rightarrow \quad d_{\Ts}( x) = \norm{x-\hat{x}^{\Fs}}.
$$
For each component $i\in=1,\ldots,n$, $\Ls_i$ and $\Us_i$ are affine functions with the same slope but different intercept, yielding
$$
\hat{x}^{\Ls}_i\leq \hat{x}^{\Ts}_i\leq \hat{x}^{\Us}_i \quad \Rightarrow \quad 
\hat{x}^{\Ls}_i-x_i\leq \hat{x}^{\Ts}_i-x_i\leq \hat{x}^{\Us}_i-x_i,
$$
and therefore $\abs{x_i-\hat{x}^{\Ts}_i}\leq \max\{\abs{x_i-\hat{x}^{\Us}_i},\abs{x_i-\hat{x}^{\Ls}_i}\}$.
Thus, the following chain of inequalities holds
\begingroup
\allowdisplaybreaks
\begin{align*}
    d_{\Ts}(x)^2 &= \textstyle\norm{x-\hat{x}^{\Ts}}
   \leq \sum_{i=1}^n \max\{\abs{x_i-\hat{x}^{\Us}_i}^2,\abs{x_i-\hat{x}^{\Ls}_i}^2\}\\
    &\overset{(b)}{\leq} \textstyle \sum_{i=1}^n \max\{ \norm{x - \hat{x}^{\Us}}_\infty^2, \norm{x - \hat{x}^{\Ls}}_\infty^2 \} \\
    &\overset{(c)}{\leq} n \max\{ \norm{x - \hat{x}^{\Us}}^2, \norm{x - \hat{x}^{\Ls}}^2 \}
\end{align*}
\endgroup
where $(b)$ holds since the infinity norm is the maximum distance among each component, $(c)$ holds since $\norm{x}_\infty \leq \norm{x}$. 
We now exploit metric subregularity of $\Us$ and $\Ls$ due to Proposition \ref{prop:ULaffMS} to prove metric subregularity of $\Ts$ as follows
$$
\begin{aligned}
    d_{\Ts}(x) &\leq \sqrt{n} \max\{ \norm{x - \hat{x}^{\Us}}, \norm{x - \hat{x}^{\Ls}} \} \\
    & \leq \sqrt{n} \max\{ \gamma_{\Ls} \norm{(\Is - \Ls)  x}, \gamma_{\Us} \norm{(\Is - \Us)  x} \}\\
    & \leq \sqrt{n} \max\{\gamma_{\Ls} ,\gamma_{\Us} \} \max\{\norm{(\Is - \Ls)  x},\norm{(\Is - \Us)  x} \}\\
    & \leq \gamma_{\Ts}\norm{(\Is - \Ts)  x},
\end{aligned}
$$
where the last inequality always holds for a sufficiently large value of $\gamma_{\Ts}$.
This completes the proof.

\end{document}